\numberwithin{equation}{subsection}
\newcommand{\sqsp}{\renewcommand{\baselinestretch}{1.15}\tiny\normalsize}
\newtheorem{theorem}[subsection]{Theorem}
\newtheorem{lemma}[subsection]{Lemma}
\newtheorem{proposition}[subsection]{Proposition}
\newtheorem{corollary}[subsection]{Corollary}
\theoremstyle{definition}
\newtheorem{definition}[subsection]{Definition}
\newcommand{\bk}{\mathbf{k}}
\newcommand{\mualpha}{\mu_\alpha}
\newcommand{\mubeta}{\mu_\beta}
\newcommand{\andspace}{\quad\text{and}\quad}
\newcommand{\defn}{\buildrel \text{def} \over =}
\begin{document}

\title{The Mikheev identity in right Hom-alternative algebras}
\author{Donald Yau}

\begin{abstract}
It is shown that in every multiplicative right Hom-alternative algebra, a Hom-type generalization of the Mikheev identity holds.  It is then inferred that a multiplicative right Hom-alternative algebra with an injective twisting map and without Hom-nilpotent elements or left zero-divisors must be a Hom-alternative algebra.
\end{abstract}

\keywords{Right Hom-alternative algebra, Mikheev identity}

\subjclass[2000]{17D15, 17D05}

\address{Department of Mathematics\\
    The Ohio State University at Newark\\
    1179 University Drive\\
    Newark, OH 43055, USA}
\email{dyau@math.ohio-state.edu}

\date{\today}
\maketitle

\sqsp

\section{Introduction}

Right and left alternative algebras are defined by the identities
\begin{equation}
\label{rightalternative}
(ab)b = a(bb) \andspace (aa)b = a(ab),
\end{equation}
respectively.  An alternative algebra is an algebra that is both left alternative and right alternative.  Alternative algebras are important for purely algebraic reasons as well as for applications in other fields.  For example, the $8$-dimensional Cayley algebras are alternative algebras that are not associative  \cite{schafer}.  Right alternative algebras give rise to Jordan algebras via the anti-commutator product \cite{albert3}, and they are power associative \cite{albert1,albert2}.  Likewise, alternative algebras give rise to Maltsev algebras via the commutator bracket \cite{maltsev}.  Moreover, alternative algebras play an important role in project geometry \cite{moufang,skor2} and generalized polygons \cite{tw}.

Twisted generalizations of (left/right) alternative algebras, called (left/right) Hom-alternative algebras, were introduced by Makhlouf \cite{mak} by relaxing the defining identities in (left/right) alternative algebras by a linear self-map $\alpha$, called the twisting map.  Construction results and examples of Hom-alternative algebras can be found in \cite{mak,yau4}.  Hom-Lie algebras first appeared in \cite{hls} to describe deformations of the Witt algebra and the Virasoro algebra.  Hom-associative algebras were defined in \cite{ms} and further studied in \cite{ms2,yau0,yau1}.  For other Hom-type algebras, see \cite{yau2,yau3,yau4,yau5,yau6} and the references therein.

Many properties of (right) alternative algebras have been generalized to (right) Hom-alternative algebras by the author.  A Hom-algebra $(A,\mu,\alpha)$ is said to be multiplicative if the twisting map $\alpha$ is multiplicative with respect to the multiplication $\mu$.  The author proved in \cite{yau4} that multiplicative Hom-alternative algebras are Hom-Maltsev admissible, and they satisfy Hom-versions of the Moufang identities.  In \cite{yau6} the author showed that multiplicative right Hom-alternative algebras are Hom-power associative and Hom-Jordan admissible.  Actually, Hom-Jordan admissibility was also established in \cite{yau4} for multiplicative Hom-alternative algebras.  The concept of Hom-power associativity was introduced by the author in \cite{yau5} as a twisted generalization of power associativity \cite{albert1,albert2}.

The purpose of this paper is to establish criteria that guarantee that a right Hom-alternative algebra is Hom-alternative.  Examples of right Hom-alternative algebras that are not left Hom-alternative can be found in \cite{yau6}.   A description of our main results follows.

The basis of our results is a theorem by Mikheev \cite{mikheev}, which we now recall.  The right and left alternative identities \eqref{rightalternative} can be written as
\begin{equation}
\label{alternative}
(a,b,b) = 0 \andspace (a,a,b) = 0,
\end{equation}
respectively, where
\[
(x,y,z) = (xy)z - x(yz)
\]
is the associator.  In \cite{mikheev} Mikheev proved that in every right alternative algebra, the identity
\begin{equation}
\label{mikheevid}
(a,a,b)^4 = 0
\end{equation}
holds.  Roughly speaking, the \emph{Mikheev identity} \eqref{mikheevid} says that every right alternative algebra is fairly close to being left alternative.  Here the fourth power is defined as
\[
x^4 = ((xx)x)x.
\]
In fact, the order of the parentheses does not matter because every right alternative algebra is power associative.

Our main result is the Hom-version of the Mikheev identity \eqref{mikheevid}.  In a Hom-algebra $(A,\mu,\alpha)$, the role of the associator is played by the \emph{Hom-associator}
\[
(x,y,z) = (xy)\alpha(z) - \alpha(x)(yz).
\]
Right and left Hom-alternative algebras are defined as in \eqref{alternative} using the Hom-associator.  A Hom-alternative algebra is both left and right Hom-alternative.  The \emph{Hom-power} $x^n$ \cite{yau5} in a Hom-algebra is defined inductively as
\[
x^n = x^{n-1}\alpha^{n-2}(x).
\]
We can now state our main result, which is a Hom-type generalization of the Mikheev identity \eqref{mikheevid} in right alternative algebras.

\begin{theorem}
\label{thm:mikheev}
Let $(A,\mu,\alpha)$ be a multiplicative right Hom-alternative algebra.  Then the identity
\begin{equation}
\label{mikheev}
\alpha^6\left((a,a,b)^4\right) = 0
\end{equation}
holds in $A$, where $(,,)$ is the Hom-associator.
\end{theorem}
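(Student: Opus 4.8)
The plan is to reproduce Mikheev's original argument for right alternative algebras \cite{mikheev} inside the Hom-setting, inserting the appropriate power of the twisting map $\alpha$ at every reassociation so that the total accumulated twisting is absorbed by the single $\alpha^6$ in \eqref{mikheev}. Throughout I write $(x,y,z)$ for the Hom-associator and use multiplicativity, $\alpha(uv)=\alpha(u)\alpha(v)$, which is precisely what permits any power of $\alpha$ to be moved freely across a product. The first step is to linearize the defining identity: replacing $b$ by $b+c$ in $(a,b,b)=0$ and using $(a,b,b)=(a,c,c)=0$ yields
\[
(a,b,c) + (a,c,b) = 0,
\]
so the Hom-associator is skew-symmetric in its last two slots. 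Apart from multiplicativity, this is the only structural input, exactly as in the classical case.

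Next I would assemble the Hom-analogues of the auxiliary associator identities on which Mikheev's proof rests. Two ingredients are available off the shelf: the Hom-power associativity of multiplicative right Hom-alternative algebras established in \cite{yau6}, which ensures that the fourth Hom-power is insensitive to reparenthesization up to $\alpha$-twists, mirroring the role of power associativity in the classical statement; and a Hom-Teichm\"uller-type identity that holds in every multiplicative Hom-algebra once the correct powers of $\alpha$ are placed on the spectator arguments of the five Hom-associators $(ab,c,d)$, $(a,bc,d)$, $(a,b,cd)$, $\alpha(a)(b,c,d)$ and $(a,b,c)\alpha(d)$. Feeding the skew-symmetry from the first step into this Teichm\"uller relation produces the right-alternative-specific Hom identities — the twisted counterparts of the linearizations Mikheev uses — which express products such as $\alpha^{k}\big((a,a,b)\,\alpha^{j}(c)\big)$ through Hom-associators of controlled $\alpha$-weight. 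Each passage through the Teichm\"uller relation or a single reassociation spends one application of $\alpha$, and it is this tally that ultimately pins down the exponent $6$.

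With these identities in hand, the strategy is to climb the tower $x^2=xx$, $x^3=x^2\alpha(x)$, $x^4=x^3\alpha^2(x)$ for $x=(a,a,b)$, showing at each level that after applying a suitable power of $\alpha$ the corresponding Hom-power of the associator is killed. Concretely I would first reduce $\alpha^{m}\big((a,a,b)^2\big)$ to associators of strictly lower complexity, then iterate through the Teichm\"uller-derived relations to annihilate the cubic and finally the quartic Hom-power, at which point the accumulated twisting weight is precisely $6$.

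The anticipated obstacle is combinatorial rather than conceptual. Mikheev's classical chain of identities is long and delicate, and in the Hom-setting every monomial must additionally be tagged with a power of $\alpha$; the danger is that these exponents fail to agree when two intermediate expressions are added or substituted, since the convenient telescoping of the classical Teichm\"uller identity no longer occurs for free. The real labor is therefore a disciplined accounting that holds every Hom-associator at a consistent twisting level throughout, so that the argument collapses cleanly to $\alpha^6\big((a,a,b)^4\big)=0$ rather than to some higher or mismatched power of $\alpha$.
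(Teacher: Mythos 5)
There is a genuine gap, and it sits exactly where you defer the work. Your central reduction is to ``climb the tower'' $x^2$, $x^3$, $x^4$ for $x=(a,a,b)$, ``showing at each level that after applying a suitable power of $\alpha$ the corresponding Hom-power of the associator is killed.'' That intermediate claim is false: no power of $\alpha$ annihilates $(a,a,b)^2$ in general. The algebras $A(\lambda,\xi)$ constructed in Section \ref{sec:ex} are multiplicative right Hom-alternative with \emph{bijective} twisting map and satisfy $(e_1,e_1,e_2)^2=-\lambda^8\xi^4e_{13}\neq 0$, so $\alpha^m\bigl((a,a,b)^2\bigr)\neq 0$ for every $m$; the same already happens classically in Mikheev's $13$-dimensional example with $\alpha=Id$, where $(e_1,e_1,e_2)^2=-e_{13}$. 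Only the \emph{fourth} power dies, and it dies as a whole, not by induction on the exponent. Consequently your level-by-level scheme collapses at its first step, and the exponent $6$ cannot be recovered by ``tallying one $\alpha$ per reassociation''---it emerges from specific operator manipulations, not from counting rewrites.

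What the correct argument (and the paper's proof) actually does is encode the fourth Hom-power as a composite of right-multiplication operators, $\alpha^6(p^4)=-a\,a^bp'p_1'p_2'\alpha^6$ in the Herstein-type notation of Definition \ref{def:herstein}, and then prove that the single operator $a^bp'p_1'p_2'\alpha^6$ vanishes. That vanishing requires a chain of identities none of which appears in your proposal: the operator forms of right Hom-alternativity and of the right Hom-Moufang identity, the key cancellation $a^b\bigl({a_2}_{b_2}\bigr)=0$, the two expressions of $\alpha^2p_k'$ through the operators $c^d$ and $c_d$, the resulting decomposition of $a^bp'p_1'p_2'\alpha^6$ into two summands, and the computations showing each summand is zero via products of Hom-associators such as $p_3\bigl([a_2,b_2],a_3,b_3\bigr)=0$. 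You correctly identify the admissible inputs (linearized skew-symmetry, multiplicativity, a Hom-Teichm\"uller identity, the Hom-Moufang identity), but the proposal supplies no derivation, and the one concrete strategy it commits to would fail. As written this is a plan, not a proof, and its organizing idea must be replaced before the plan can be executed.
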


Theorem \ref{thm:mikheev} says that in a multiplicative right Hom-alternative algebra, although the Hom-associator $(a,a,b)$ does not have to be zero, it is nonetheless the case that its fourth Hom-power $(a,a,b)^4$ lies in the kernel of $\alpha^6$.

We now discuss some consequences of Theorem \ref{thm:mikheev}.  The following result is the special case when the twisting map is injective.

\begin{corollary}
\label{cor0:mikheev}
Let $(A,\mu,\alpha)$ be a multiplicative right Hom-alternative algebra in which $\alpha$ is injective.  Then the identity
\[
(a,a,b)^4 = 0
\]
holds in $A$, where $(,,)$ is the Hom-associator.
\end{corollary}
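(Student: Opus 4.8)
The plan is to obtain this as an immediate consequence of Theorem \ref{thm:mikheev}, since that theorem already carries all of the substantive content. The theorem asserts that $\alpha^6\left((a,a,b)^4\right) = 0$ holds in any multiplicative right Hom-alternative algebra, so the corollary reduces to exploiting the extra injectivity hypothesis on the twisting map $\alpha$ in order to cancel the prefactor $\alpha^6$.

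First I would record that $\alpha^6$ denotes the sixth iterate $\alpha \circ \alpha \circ \alpha \circ \alpha \circ \alpha \circ \alpha$ of the linear self-map $\alpha$. A composite of injective maps is injective, so the assumption that $\alpha$ is injective immediately forces $\alpha^6$ to be injective; equivalently, $\ker(\alpha^6) = 0$. This is the only place where the hypothesis of the corollary is used.

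Next I would apply Theorem \ref{thm:mikheev} to the arbitrary element $(a,a,b)^4$, which shows that this element lies in $\ker(\alpha^6)$. Combining this membership with the triviality of $\ker(\alpha^6)$ established in the previous step yields $(a,a,b)^4 = 0$ for all $a,b \in A$, which is exactly the desired identity.

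I do not expect any genuine obstacle here: the difficult estimates and Hom-associator manipulations are all confined to the proof of Theorem \ref{thm:mikheev}, and the corollary amounts to the elementary remark that an injective linear map has trivial kernel, so a factor of $\alpha^6$ may be stripped off whenever $\alpha$ is injective.
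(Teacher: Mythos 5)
Your proposal is correct and is precisely the argument the paper intends: the corollary is stated as an immediate consequence of Theorem \ref{thm:mikheev}, obtained by noting that injectivity of $\alpha$ forces $\alpha^6$ to be injective and hence to have trivial kernel. Nothing further is needed.
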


Restricting to the case $\alpha = Id_A$ in Corollary \ref{cor0:mikheev}, one recovers the Mikheev identity \eqref{mikheevid} in a right alternative algebra \cite{mikheev}.

Recall that a division algebra is a unital algebra in which $1 \not= 0$ and every non-zero element has a two-sided multiplicative inverse.  In a right alternative division algebra, it is true that $x^4 = 0$ implies $x = 0$.  Indeed, if $y^2 = 0$, then $y = 0$ because, otherwise, $y^{-1}$ exists.  But right alternativity then implies
\[
0 = y^{-1}(yy) = (y^{-1}y)y = y,
\]
which is a contradiction.  Now by right alternativity again, we have
\[
0 = x^4 = (x^2,x,x) + x^2x^2 = x^2x^2,
\]
so $x^2 = 0$, and hence $x = 0$.  In particular, from the $\alpha = Id_A$ special case of Corollary \ref{cor0:mikheev}, one recovers the following result of Skornyakov \cite{skor1}.

\begin{corollary}
Every right alternative division algebra is an alternative algebra.
\end{corollary}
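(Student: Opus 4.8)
The plan is to deduce left alternativity directly from the Mikheev identity together with the division-algebra property already isolated in the paragraph preceding the statement. Since $A$ is by hypothesis right alternative, showing that it is alternative amounts to verifying the left alternative identity $(a,a,b) = 0$ for all $a,b \in A$, where the associator is the ordinary (untwisted) one, i.e.\ the $\alpha = Id_A$ specialization of the Hom-associator.

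First I would invoke Corollary \ref{cor0:mikheev} in the special case $\alpha = Id_A$, which is legitimate because $Id_A$ is injective and exhibits the given right alternative algebra $A$ as a multiplicative right Hom-alternative algebra whose Hom-associator is the usual associator. This yields the Mikheev identity $(a,a,b)^4 = 0$ for all $a,b \in A$. Next I would apply the elementary fact recorded in the discussion above, namely that in a right alternative division algebra the equation $x^4 = 0$ forces $x = 0$, to the element $x = (a,a,b)$. This immediately gives $(a,a,b) = 0$, so $A$ is left alternative, and hence alternative.

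The only substantive ingredient is the implication $x^4 = 0 \Rightarrow x = 0$, whose proof is given just before the statement: one first shows $y^2 = 0 \Rightarrow y = 0$ by multiplying by the inverse $y^{-1}$ and invoking right alternativity, and then expands $x^4 = (x^2,x,x) + x^2 x^2 = x^2 x^2$ (the associator term vanishing by right alternativity) to reduce $x^4 = 0$ to $x^2 = 0$, and thence to $x = 0$. I therefore anticipate no genuine obstacle here: once the Mikheev identity is in hand, the corollary is an immediate assembly of these two facts, and the real content lies in Theorem \ref{thm:mikheev} and Corollary \ref{cor0:mikheev}, not in this final deduction.
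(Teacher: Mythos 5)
Your proposal is correct and follows exactly the paper's own route: specialize Corollary \ref{cor0:mikheev} to $\alpha = Id_A$ to obtain the Mikheev identity $(a,a,b)^4 = 0$, then invoke the observation (proved in the paragraph preceding the statement) that $x^4 = 0$ forces $x = 0$ in a right alternative division algebra. Nothing is missing; the deduction is the same immediate assembly the paper intends.
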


In a Hom-algebra, define a \emph{Hom-nilpotent element} to be a non-zero element $x$ such that the Hom-power $x^n$ is $0$ for some $n \geq 2$.  In the context of Corollary \ref{cor0:mikheev}, if the Hom-associator $(a,a,b)$ is non-zero for some elements $a$ and $b$, then it is a Hom-nilpotent element.  We now have the following special case of Corollary \ref{cor0:mikheev}, which gives conditions under which a right Hom-alternative algebra is Hom-alternative.

\begin{corollary}
\label{cor1:mikheev}
Let $(A,\mu,\alpha)$ be a multiplicative right Hom-alternative algebra.  Suppose that $\alpha$ is injective and that $A$ has no Hom-nilpotent elements.  Then $A$ is a Hom-alternative algebra.
\end{corollary}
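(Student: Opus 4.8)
The plan is to deduce left Hom-alternativity from the injective case of the Mikheev identity, since right Hom-alternativity is already part of the hypotheses. Because $\alpha$ is injective, Corollary \ref{cor0:mikheev} applies directly and yields
\[
(a,a,b)^4 = 0
\]
for all $a, b \in A$. The whole argument then reduces to upgrading this vanishing of a fourth Hom-power to the vanishing of the Hom-associator $(a,a,b)$ itself, and this is precisely the point at which the no-Hom-nilpotent hypothesis is used.

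First I would argue by contradiction. Suppose $A$ fails to be left Hom-alternative, so that $(a,a,b) \neq 0$ for some $a, b \in A$. Setting $x = (a,a,b)$, we obtain a non-zero element of $A$ whose fourth Hom-power $x^4$ vanishes. Since $4 \geq 2$, the element $x$ is by definition a Hom-nilpotent element, contradicting the assumption that $A$ has none. Therefore $(a,a,b) = 0$ for all $a, b \in A$, which is exactly the left Hom-alternative identity written via the Hom-associator.

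Combining this newly established left Hom-alternativity with the given right Hom-alternativity shows that $A$ is Hom-alternative, as required. I do not anticipate any genuine obstacle in this deduction: the proof is essentially a one-line consequence of Corollary \ref{cor0:mikheev} together with the definition of a Hom-nilpotent element. The only point worth verifying carefully is that the fourth power $x^4$ appearing in Corollary \ref{cor0:mikheev} is literally the Hom-power $x^4$ used in the definition of a Hom-nilpotent element, so that a non-zero Hom-associator satisfying $x^4 = 0$ indeed qualifies as Hom-nilpotent with $n = 4$.
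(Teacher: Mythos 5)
Your proof is correct and follows exactly the paper's own reasoning: Corollary \ref{cor0:mikheev} gives $(a,a,b)^4 = 0$, and a non-zero Hom-associator would then be a Hom-nilpotent element (the fourth power in Corollary \ref{cor0:mikheev} is indeed the Hom-power of Definition \ref{def:hompower}), contradicting the hypothesis. Nothing to add.
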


Setting $\alpha = Id_A$ in Corollary \ref{cor1:mikheev}, one recovers the following result of Kleinfeld \cite{kleinfeld}.

\begin{corollary}
\label{cor:kleinfeld}
Every right alternative algebra without nilpotent elements is an alternative algebra.
\end{corollary}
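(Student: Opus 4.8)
The plan is to deduce this from Corollary \ref{cor1:mikheev} by specializing to $\alpha = Id_A$. First I would observe that a right alternative algebra $(A,\mu)$ is precisely a multiplicative right Hom-alternative algebra with $\alpha = Id_A$: the identity map is trivially multiplicative with respect to $\mu$, and when $\alpha = Id_A$ the Hom-associator $(x,y,z) = (xy)\alpha(z) - \alpha(x)(yz)$ collapses to the ordinary associator $(xy)z - x(yz)$, so the right Hom-alternative identity $(a,b,b)=0$ becomes the right alternative identity in \eqref{rightalternative}. Moreover $Id_A$ is manifestly injective, so the injectivity hypothesis of Corollary \ref{cor1:mikheev} is automatic.

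Next I would match up the nilpotency hypotheses. With $\alpha = Id_A$, the Hom-power $x^n = x^{n-1}\alpha^{n-2}(x)$ reduces to the right-normed power $x^n = x^{n-1}x$, and since every right alternative algebra is power associative this agrees with the ordinary $n$th power regardless of how the parentheses are arranged. Consequently a Hom-nilpotent element of $(A,\mu,Id_A)$ is exactly a nonzero $x$ with $x^n = 0$ for some $n \geq 2$, that is, an ordinary nilpotent element. Thus the assumption that $A$ has no nilpotent elements is precisely the assumption that $A$ has no Hom-nilpotent elements.

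With these identifications in place, all the hypotheses of Corollary \ref{cor1:mikheev} are satisfied, so I would apply it directly to conclude that $(A,\mu,Id_A)$ is a Hom-alternative algebra. Unwinding the $\alpha = Id_A$ case one final time, being Hom-alternative means both Hom-associator identities hold, and these are exactly the ordinary left and right alternative identities \eqref{alternative}; hence $A$ is an alternative algebra. There is no genuine obstacle here beyond bookkeeping: the only point requiring a moment's care is the reduction of the Hom-power to the ordinary power and the resulting identification of the two notions of nilpotence, for which the power associativity of right alternative algebras is what guarantees that the definitions coincide.
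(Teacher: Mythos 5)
Your proposal is correct and follows exactly the paper's route: the paper proves this corollary simply by setting $\alpha = Id_A$ in Corollary \ref{cor1:mikheev}, which is precisely the specialization you carry out. The bookkeeping you supply (identity twisting map is injective and multiplicative, Hom-powers reduce to right-normed powers, Hom-nilpotent elements become ordinary nilpotent elements) is the intended, if unstated, content of that reduction.
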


We now discuss a special case of Corollary \ref{cor1:mikheev}.  Zero-divisors in Hom-algebras are defined as usual.  More precisely, a non-zero element $a$ in a Hom-algebra $(A,\mu,\alpha)$ is called a \emph{left (resp., right) zero-divisor} if there exists a non-zero element $b$ such that
\[
ab = 0 \quad\text{(resp., $ba = 0$)}.
\]
A Hom-algebra has a left zero-divisor if and only if it has a right zero-divisor.  If the twisting map $\alpha$ is injective, then every Hom-nilpotent element gives rise to a left zero-divisor.  Indeed, suppose $a$ is a Hom-nilpotent element and $n \geq 2$ is the least integer such that
\[
a^n = a^{n-1}\alpha^{n-2}(a) = 0.
\]
Then $a^{n-1} \not= 0$ by the choice of $n$, and $\alpha^{n-2}(a) \not= 0$ by the injectivity of $\alpha$.  Thus, the Hom-power $a^{n-1}$ is a left zero-divisor.  Therefore, when $\alpha$ is injective, the absence of left zero-divisors implies the absence of Hom-nilpotent elements.   We now have the following special case of Corollary \ref{cor1:mikheev}.

\begin{corollary}
\label{cor2:mikheev}
Let $(A,\mu,\alpha)$ be a multiplicative right Hom-alternative algebra.  Suppose that $\alpha$ is injective and that $A$ has no left zero-divisors.  Then $A$ is a Hom-alternative algebra.
\end{corollary}

Setting $\alpha = Id_A$ in Corollary \ref{cor2:mikheev}, one recovers the following special case of Corollary \ref{cor:kleinfeld}.

\begin{corollary}
Every right alternative algebra without left zero-divisors is an alternative algebra.
\end{corollary}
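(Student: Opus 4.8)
The plan is to obtain this statement as the special case $\alpha = Id_A$ of Corollary \ref{cor2:mikheev}. First I would observe that any right alternative algebra $(A,\mu)$ may be regarded as a Hom-algebra $(A,\mu,Id_A)$, and that this Hom-algebra is automatically multiplicative, since the identity map is trivially multiplicative with respect to $\mu$. Next I would check that under the substitution $\alpha = Id_A$ the Hom-associator $(x,y,z) = (xy)\alpha(z) - \alpha(x)(yz)$ collapses to the ordinary associator $(xy)z - x(yz)$. Consequently the defining identity $(a,b,b) = 0$ of right Hom-alternativity becomes exactly the right alternative identity $(ab)b = a(bb)$, so that $(A,\mu,Id_A)$ is a multiplicative right Hom-alternative algebra precisely when $(A,\mu)$ is right alternative.

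With this identification in place, the remaining hypotheses of Corollary \ref{cor2:mikheev} are immediate. The identity map is injective, and the absence of left zero-divisors is assumed. Since the notion of a left zero-divisor in a Hom-algebra is defined purely in terms of the multiplication $\mu$ and does not involve $\alpha$, this hypothesis transfers verbatim from $(A,\mu)$ to $(A,\mu,Id_A)$. Applying Corollary \ref{cor2:mikheev} therefore yields that $(A,\mu,Id_A)$ is Hom-alternative. Finally I would note that, by the same collapse of the Hom-associator to the associator, Hom-alternativity with $\alpha = Id_A$ is exactly the classical alternative condition, which gives the desired conclusion.

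I do not expect any substantial obstacle: the entire mathematical content is already carried by Corollary \ref{cor2:mikheev}, and the only thing that needs verifying is the purely formal point that each Hom-theoretic notion in play, namely the Hom-associator, right Hom-alternativity, Hom-alternativity, and left zero-divisors, specializes correctly to its classical counterpart when the twisting map is taken to be the identity.
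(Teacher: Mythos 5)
Your proposal is correct and follows exactly the paper's route: the paper also obtains this corollary by setting $\alpha = Id_A$ in Corollary \ref{cor2:mikheev}, and your verification that the Hom-associator, the alternativity conditions, and the notion of left zero-divisor all specialize to their classical counterparts is precisely the (routine) content of that specialization.
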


This finishes the discussion of our results.  The rest of this paper is organized as follows.  In the next section, we recall some basic definitions regarding Hom-algebras and some properties of right Hom-alternative algebras.  In section \ref{sec:ex} we illustrate Corollary \ref{cor0:mikheev} by exhibiting an infinite family of mutually non-isomorphic multiplicative right Hom-alternative algebras in which the twisting maps are injective and $(a,a,b)^2 \not= 0$ for some elements $a$ and $b$.  The proof of Theorem \ref{thm:mikheev} is given in section \ref{sec:main}.

\section{Basic properties of right Hom-alternative algebras}
\label{sec:basic}

The purpose of this section is to recall some properties of right Hom-alternative algebras that are needed for the proof of Theorem \ref{thm:mikheev}.

\subsection{Notations}

We work over a fixed field $\bk$ of characteristic $0$.  Given elements $x$ and $y$ in a $\bk$-module $A$ and a bilinear map $\mu \colon A^{\otimes 2} \to A$, we often abbreviate $\mu(x,y)$ to $xy$.  For a linear self-map $\alpha \colon A \to A$, denote by $\alpha^n$ the $n$-fold composition of $n$ copies of $\alpha$, with $\alpha^0 \equiv Id$.  The alphabets $a,b,c,d,w,x,y$, and $z$ will denote arbitrary elements in the $\bk$-module under consideration.

We first recall some basic definitions regarding Hom-algebras.

\begin{definition}
\label{def:homalg}
A \textbf{Hom-algebra} is a triple $(A,\mu,\alpha)$ in which $A$ is a $\bk$-module, $\mu \colon A^{\otimes 2} \to A$ is a bilinear map, called the multiplication, and $\alpha \colon A \to A$ is a linear map, called the twisting map.  A Hom-algebra $(A,\mu,\alpha)$ is often abbreviated to $A$.  A Hom-algebra $(A,\mu,\alpha)$ is \textbf{multiplicative} if $\alpha$ is multiplicative with respect to $\mu$, i.e., $\alpha\mu = \mu \alpha^{\otimes 2}$.
\end{definition}

From now on, an algebra $(A,\mu)$ is also regarded as a Hom-algebra $(A,\mu,Id)$ with identity twisting map, unless otherwise specified.  Next we recall the Hom-type generalizations of the associator and (left/right) alternative algebras.

\begin{definition}
\label{def:homassociator}
Let $(A,\mu,\alpha)$ be a Hom-algebra.
\begin{enumerate}
\item
Define the \textbf{Hom-associator} \cite{ms} $(,,)_A \colon A^{\otimes 3} \to A$ by
\begin{equation}
\label{homassociator}
(x,y,z)_A = (xy)\alpha(z) - \alpha(x)(yz).
\end{equation}
\item
A \textbf{right Hom-alternative algebra} is defined by the identity
\begin{equation}
\label{rhomalt}
(x,y,y)_A = 0.
\end{equation}
A \textbf{left Hom-alternative algebra} is defined by the identity
\[
(x,x,y)_A = 0.
\]
A \textbf{Hom-alternative algebra} \cite{mak} is a Hom-algebra that is both left Hom-alternative and right Hom-alternative.
\end{enumerate}
\end{definition}

When there is no danger of confusion, we will omit the subscript $A$ in the Hom-associator $(,,)_A$.  When the twisting map $\alpha$ is the identity map, the above definitions reduce to the usual definitions of the associator and (left/right) alternative algebras.  In expanded form, right Hom-alternativity \eqref{rhomalt} means the identity
\begin{equation}
\label{xyy}
(xy)\alpha(y) = \alpha(x)(yy)
\end{equation}
holds.

Next we recall from \cite{yau5} the Hom-type generalization of right powers \cite{albert1,albert2}.

\begin{definition}
\label{def:hompower}
Let $(A,\mu,\alpha)$ be a Hom-algebra and $x$ be an element in $A$.  Define the \textbf{$n$th Hom-power} $x^n \in A$ inductively by
\begin{equation}
\label{hompower}
x^1 = x \andspace
x^n = x^{n-1}\alpha^{n-2}(x)
\end{equation}
for $n \geq 2$.
\end{definition}

We have
\begin{equation}
\label{x4}
x^2 = xx,\quad
x^3 = (xx)\alpha(x), \andspace
x^4 = ((xx)\alpha(x))\alpha^2(x).
\end{equation}
Multiplicative right Hom-alternative algebras are Hom-power associative \cite{yau5,yau6}.  However, we do not need that result in this paper.

In the rest of this section, we recall some properties of right Hom-alternative algebras that will be used in later sections.  The following result gives the linearization of the right Hom-alternative identity.

\begin{lemma}[\cite{mak}]
\label{lem:linearized}
Let $(A,\mu,\alpha)$ be a Hom-algebra.  Then the following statements are equivalent.
\begin{enumerate}
\item
$A$ is right Hom-alternative, i.e., $(x,y,y) = 0$.
\item
$A$ satisfies
\begin{equation}
\label{righthomalt}
(x,y,z) = -(x,z,y).
\end{equation}
\end{enumerate}
\end{lemma}

In a right alternative algebra, it is known that the \emph{right Moufang identity}
\[
((xy)z)y = x((yz)y)
\]
holds \cite{kleinfeld}.  The following result is the Hom-version of this identity.

\begin{lemma}
\label{lem:moufang}
Let $(A,\mu,\alpha)$ be a multiplicative right Hom-alternative algebra.  Then the \textbf{right Hom-Moufang identity}
\begin{equation}
\label{rightmoufang}
\left[(xy)\alpha(z)\right]\alpha^2(y) = \alpha^2(x)\left[(yz)\alpha(y)\right]
\end{equation}
holds.
\end{lemma}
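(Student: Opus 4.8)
The plan is to prove \eqref{rightmoufang} by showing that the difference of its two sides vanishes, systematically trading products for Hom-associators and then exploiting the linearized right Hom-alternative identity from Lemma \ref{lem:linearized} together with multiplicativity.

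First I would rewrite the outer product on each side using the Hom-associator \eqref{homassociator} in the rearranged form $(ab)\alpha(c) = \alpha(a)(bc) + (a,b,c)$. Applied to the left side with $a=xy$, $b=\alpha(z)$, $c=\alpha(y)$, and, in the form $\alpha(a)(bc)=(ab)\alpha(c)-(a,b,c)$, to the right side with $a=\alpha(x)$, $b=yz$, $c=\alpha(y)$, and then simplifying the resulting $\alpha$-terms by multiplicativity ($\alpha(xy)=\alpha(x)\alpha(y)$, $\alpha(z)\alpha(y)=\alpha(zy)$, and so on), this expresses each side as a product plus a single Hom-associator. Next I would invoke the linearization $(u,v,w)=-(u,w,v)$ of Lemma \ref{lem:linearized} to permute the last two arguments of each associator so that a repeated entry appears, at which point the expanded right Hom-alternative identity \eqref{xyy}, namely $(xy)\alpha(y)=\alpha(x)(yy)$, can be applied.

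Carrying this out reduces \eqref{rightmoufang} to the vanishing of
\[
E = \alpha\bigl((xy)(zy)\bigr) - \bigl[(xy)\alpha(y)\bigr]\alpha^2(z) - \bigl[\alpha(x)(yz)\bigr]\alpha^2(y) + \alpha^2(x)\bigl[\alpha(y)(yz)\bigr],
\]
after the two $\bigl[\alpha(x)\alpha(y)\bigr]\alpha(yz)$ contributions cancel against each other. The remaining work is a second pass of the same kind: reintroduce Hom-associators into the four terms of $E$ (for instance $(xy,\alpha(y),\alpha(z))$ and $(\alpha(x),yz,\alpha(y))$), permute arguments by Lemma \ref{lem:linearized}, and collapse the repeated-entry associators by \eqref{xyy}, keeping the $\alpha^{k}$ powers aligned via multiplicativity so that the four terms telescope to zero.

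The hard part will be this second pass. Unlike the classical case, where one can cite the Teichm\"uller associator identity, here the bookkeeping of the twisting map is the real obstacle: each application of \eqref{xyy} or of the linearization must be made on arguments that already carry $\alpha$-powers, and one must verify that multiplicativity places every surviving $\alpha$ in precisely the position needed for the monomials to cancel in pairs. I expect that pushing all terms into the common form $\alpha(\,\cdot\,)$ of fully $\alpha$-twisted products, and using the $x=y$ case $\alpha(y)(yy)=(yy)\alpha(y)$ of \eqref{xyy} where a repeated factor is buried inside a product, is what makes $E=0$ transparent.
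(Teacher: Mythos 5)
There is a genuine gap, and it sits exactly where you flag it yourself: the ``second pass.'' Your first reduction is correct --- unfolding both sides via $(ab)\alpha(c)=\alpha(a)(bc)+(a,b,c)$, applying \eqref{righthomalt}, and cancelling the $[\alpha(x)\alpha(y)]\alpha(yz)$ terms does reduce \eqref{rightmoufang} to $E=0$. But that step is essentially bookkeeping; all of the content of the Moufang identity is concentrated in $E=0$, and the moves you propose for it (reintroduce Hom-associators, permute the last two slots by Lemma \ref{lem:linearized}, collapse repeated entries by \eqref{xyy}) will not telescope. If you try it, you will find that terms of the form $\alpha^2(x)\,(y,y,z)$ and $(x,y,z)\,\alpha^2(y)$ --- an element multiplied against a Hom-associator from the left or right --- inevitably appear (e.g.\ from rewriting $\alpha^2(x)[\alpha(y)(yz)]$ as $\alpha^2(x)[(yy)\alpha(z)]-\alpha^2(x)(y,y,z)$), and neither the linearized identity nor \eqref{xyy} can touch them; further substitutions just cycle back to expressions equivalent to the original claim. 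Relating such ``element times associator'' terms to associators whose slots contain products is precisely what the Teichm\"{u}ller identity does, and no amount of two-slot permutation substitutes for it.

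Your premise that the Teichm\"{u}ller route is unavailable in the Hom setting is also mistaken, and the paper's proof is built on exactly that route: the Hom-Teichm\"{u}ller identity $f(w,x,y,z)=(wx,\alpha(y),\alpha(z))-(\alpha(w),xy,\alpha(z))+(\alpha(w),\alpha(x),yz)-\alpha^2(w)(x,y,z)-(w,x,y)\alpha^2(z)=0$ holds in any \emph{multiplicative} Hom-algebra by pure expansion (multiplicativity is what lets the $\alpha$'s line up). The decisive step is then the alternating combination $0=f(x,y,y,z)-f(x,z,y,y)+f(x,y,z,y)$, which together with right Hom-alternativity yields $(\alpha(x),\alpha(y),yz)=(x,y,z)\alpha^2(y)$; the Moufang identity follows from this in one further line. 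To repair your argument, either prove $E=0$ by this combination or restructure the whole proof around the Hom-Teichm\"{u}ller identity as the paper does; as written, the claim that the four terms of $E$ ``telescope to zero'' is an unproven assertion at the only nontrivial point of the argument.
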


\begin{proof}
This result was proved in \cite{yau6}.  Here is a sketch of the proof.  In any multiplicative Hom-algebra, the \emph{Hom-Teichm\"{u}ller identity}
\[
\begin{split}
f(w,x,y,z) &\defn (wx,\alpha(y),\alpha(z)) - (\alpha(w),xy,\alpha(z)) + (\alpha(w),\alpha(x),yz)\\
&\relphantom{} - \alpha^2(w)(x,y,z) - (w,x,y)\alpha^2(z)\\
&= 0
\end{split}
\]
holds, which one can show by expanding the five Hom-associators.  Using the Hom-Teichm\"{u}ller identity, one shows that
\begin{equation}
\label{xyyz}
(\alpha(x),\alpha(y),yz) = (x,y,z)\alpha^2(y)
\end{equation}
by expanding the right-hand side of
\[
0 = f(x,y,y,z) - f(x,z,y,y) + f(x,y,z,y).
\]
Finally, one proves the right Hom-Moufang identity by computing $\left[(xy)\alpha(z)\right]\alpha^2(y)$ using \eqref{homassociator}, \eqref{xyyz}, \eqref{righthomalt}, and then \eqref{homassociator} again.
\end{proof}

The category of right Hom-alternative algebras is closed under twisting by self-weak morphisms.  Let us first recall the relevant definitions.

\begin{definition}
\label{def:morphism}
Let $A$ and $B$ be Hom-algebras.  A \textbf{weak morphism} $f \colon A \to B$ of Hom-algebras is a linear map $f \colon A \to B$ such that $f\mu_A = \mu_B f^{\otimes 2}$. A \textbf{morphism} $f \colon A \to B$ is a weak morphism such that $f\alpha_A = \alpha_Bf$.  The Hom-algebras $A$ and $B$ are \textbf{isomorphic} if there exists an invertible morphism $f \colon A \to B$.
\end{definition}

The following result was proved in \cite{yau6}.  It says that the category of right Hom-alternative algebras is closed under twisting by weak morphisms.

\begin{theorem}
\label{thm:twist}
Let $(A,\mu,\alpha)$ be a right Hom-alternative algebra, and let $\beta \colon A \to A$ be a weak morphism.  Then the Hom-algebra
\begin{equation}
\label{abeta}
A_\beta = (A,\mubeta = \beta \mu, \beta\alpha)
\end{equation}
is also a right Hom-alternative algebra.  Moreover, if $A$ is multiplicative and $\beta$ is a morphism, then $A_\beta$ is multiplicative.
\end{theorem}

Indeed, given any Hom-algebra $A$ and weak morphism $\beta \colon A \to A$, their Hom-associators are related as follows:
\begin{equation}
\label{beta2}
\beta^2 (x,y,z)_A = (x,y,z)_{A_\beta}.
\end{equation}
Therefore, if $A$ is right Hom-alternative, then so is $A_\beta$.

Setting $\alpha = Id_A$ in Theorem \ref{thm:twist}, one recovers \cite{mak} (Theorem 3.1), which says that multiplicative right Hom-alternative algebras may arise from right alternative algebras and their morphisms.  A twisting result of this form was first given by the author in \cite{yau1} for $G$-Hom-associative algebras, which include Hom-associative and Hom-Lie algebras.

\begin{corollary}
\label{cor2:twist}
Let $(A,\mu)$ be a right alternative algebra and $\beta \colon A \to A$ be an algebra morphism.  Then
\[
A_\beta = (A,\mubeta = \beta\mu,\beta)
\]
is a multiplicative right Hom-alternative algebra.
\end{corollary}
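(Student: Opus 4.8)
The plan is to recognize Corollary \ref{cor2:twist} as the $\alpha = Id_A$ specialization of Theorem \ref{thm:twist} and to deduce it directly from that theorem. First I would translate the hypotheses into Hom-algebra language using the convention, stated after Definition \ref{def:homalg}, that an ordinary algebra $(A,\mu)$ is regarded as the Hom-algebra $(A,\mu,Id_A)$. Under this convention the Hom-associator \eqref{homassociator} collapses to the ordinary associator, so right alternativity of $(A,\mu)$ is exactly right Hom-alternativity of $(A,\mu,Id_A)$. I would also note that $(A,\mu,Id_A)$ is automatically multiplicative, since $Id_A\,\mu = \mu\,Id_A^{\otimes 2}$.

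Next I would verify that the algebra morphism $\beta$ is a morphism of the Hom-algebra $(A,\mu,Id_A)$ in the sense of Definition \ref{def:morphism}. This requires two conditions: the weak-morphism condition $\beta\mu = \mu\beta^{\otimes 2}$, which is precisely the statement that $\beta$ is an algebra morphism of $(A,\mu)$, and compatibility with the twisting map, $\beta\,Id_A = Id_A\,\beta$, which holds trivially. Hence $\beta$ is indeed a morphism of $(A,\mu,Id_A)$.

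With these observations in place, I would apply Theorem \ref{thm:twist} to the right Hom-alternative algebra $(A,\mu,Id_A)$ and the morphism $\beta$. The theorem produces the twisted Hom-algebra $A_\beta = (A,\mubeta = \beta\mu,\beta\,Id_A)$, which is right Hom-alternative; and since $(A,\mu,Id_A)$ is multiplicative and $\beta$ is a morphism, $A_\beta$ is multiplicative as well. Finally, identifying $\beta\,Id_A = \beta$ shows that this $A_\beta$ is precisely the Hom-algebra $(A,\beta\mu,\beta)$ in the statement, completing the argument.

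I do not expect a substantive obstacle here, since the corollary is a direct specialization of an already-established result. The only point requiring genuine care is the bookkeeping around Definition \ref{def:morphism}: one must check both the weak-morphism identity and the twisting-map compatibility, and then confirm that the resulting twisting map $\beta\,Id_A$ coincides with the $\beta$ appearing in the statement. Everything else is furnished by Theorem \ref{thm:twist}.
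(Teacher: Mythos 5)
Your proposal is correct and matches the paper's route exactly: the paper derives Corollary \ref{cor2:twist} precisely by setting $\alpha = Id_A$ in Theorem \ref{thm:twist}, and your bookkeeping (viewing $(A,\mu)$ as the multiplicative Hom-algebra $(A,\mu,Id_A)$, checking that an algebra morphism is a Hom-algebra morphism, and identifying $\beta\,Id_A = \beta$) is exactly what that specialization requires.
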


In the next section, we will use Corollary \ref{cor2:twist} to construct multiplicative right Hom-alternative algebras from a given right alternative algebra.

\section{Examples}
\label{sec:ex}

The purpose of this section is to construct examples of right Hom-alternative algebras that are not right alternative and in which $(a,a,b)^2 \not= 0$ for some elements $a$ and $b$.

We start with a right alternative algebra constructed by Mikheev \cite{mikheev}.  Consider the $13$-dimensional algebra $A$ with a basis $\{e_i \colon 1 \leq i \leq 13\}$.  The non-zero binary products among the basis elements are:
\[
\begin{split}
e_1e_1 &= e_3,\quad e_1e_2 = e_4,\quad e_1e_3 = e_5, \quad e_1e_4 = e_8, \quad e_1e_6 = e_9,\\
e_1e_7 &= e_{12} = e_1e_9, \quad e_1e_{10} = e_{11},\\
e_2e_1 &= e_6, \quad e_2e_3 = e_{10}, \quad e_3e_1 = e_5, \quad e_3e_2 = e_7, \quad e_3e_6 = e_{11} + e_{12},\\
e_4e_1 &= -e_7 + e_8 + e_9, \quad e_5e_2 = e_{11} + e_{12}, \quad e_6e_1 = e_{10},\\
e_8e_7 &= e_{13} = e_8e_9 = -e_8e_{10},\quad
e_9e_7 = -e_{13} = e_9e_9 = -e_9e_{10},\\
e_{11}e_4 &= e_{13} = -e_{11}e_6,\quad
e_{12}e_4 = -e_{13} = -e_{12}e_6.
\end{split}
\]
Mikheev showed in \cite{mikheev} that $A$ is a right alternative algebra such that
\[
(e_1,e_1,e_2)_A = e_7 - e_8 \andspace
(e_1,e_1,e_2)_A^2 = -e_{13}.
\]
We will apply Corollary \ref{cor2:twist} to the right alternative algebra $A$ with suitable morphisms.

Let us now construct a family of morphisms on $A$.  Let $\lambda$ and $\xi$ be scalars in $\bk$ with $\lambda\xi \not= 0$ and $\lambda \not= \xi$.  Consider the linear map $\alpha = \alpha_{\lambda,\xi} \colon A \to A$ defined by:
\[
\begin{split}
\alpha(e_1) &= \lambda e_1,\quad \alpha(e_2) = \xi e_2, \quad \alpha(e_3) = \lambda^2 e_3, \\
\alpha(e_4) &= \lambda\xi e_4, \quad
\alpha(e_5) = \lambda^3 e_5, \quad \alpha(e_6) = \lambda\xi e_6,\\
\alpha(e_i) &= \lambda^2\xi e_i \quad\text{for $i=7,8,9,10$},\\
\alpha(e_j) &= \lambda^3\xi e_j \quad\text{for $j=11,12$}, \quad \alpha(e_{13}) = \lambda^4\xi^2 e_{13}.
\end{split}
\]
It is straightforward to check that $\alpha_{\lambda,\xi}$ preserves all the defining relations among the $e_i$'s, so $\alpha_{\lambda,\xi}$ is an algebra morphism on $A$.  By Corollary \ref{cor2:twist} there is a multiplicative right Hom-alternative algebra
\[
A(\lambda,\xi) = A_{\alpha_{\lambda,\xi}} = (A, \alpha_{\lambda,\xi}\mu, \alpha_{\lambda,\xi}),
\]
where $\mu$ is the original multiplication on $A$. Note that $\alpha_{\lambda,\xi}$ is an isomorphism, so $A(\lambda,\xi)$ satisfies the hypotheses of Corollary \ref{cor0:mikheev}.

We aim to prove the following three statements:
\begin{enumerate}
\item
$(A, \alpha_{\lambda,\xi}\mu)$ is not a right alternative algebra.
\item
$(e_1,e_1,e_2)^2_{A(\lambda,\xi)} \not= 0$.
\item
The Hom-algebras $A(\lambda,\xi)$ and $A(\lambda',\xi')$ are not isomorphic (Definition \ref{def:morphism}), provided that one of the following conditions holds:
\[
\lambda \not= \lambda'^r\xi'^s, \quad \xi \not= \lambda'^r\xi'^s, \quad \lambda' \not= \lambda^r\xi^s, \quad\text{or}\quad \xi' \not= \lambda^r\xi^s
\]
for any $r \in \{0,\ldots,4\}$ and $s \in \{0,1,2\}$.
\end{enumerate}

To prove that $(A, \alpha_{\lambda,\xi}\mu)$ is not a right alternative algebra, let $\mualpha$ denote the multiplication $\alpha_{\lambda,\xi}\mu$.  Then we have
\[
\mualpha(\mualpha(e_2,e_1),e_1) = \lambda^3\xi^2 e_{10}
\andspace
\mualpha(e_2,\mualpha(e_1,e_1)) = \lambda^4\xi e_{10}.
\]
The two elements above are equal if and only
\[
\lambda^3\xi^2 = \lambda^4\xi,
\]
or equivalently $\lambda = \xi$.  This last condition does not happen by our choices of $\lambda$ and $\xi$.  We conclude that $(A, \alpha_{\lambda,\xi}\mu)$ is not a right alternative algebra.

To prove the second assertion, we use \eqref{beta2} and compute as follows:
\[
\begin{split}
(e_1,e_1,e_2)^2_{A(\lambda,\xi)}
&= \left\{\alpha^2(e_1,e_1,e_2)_A\right\}^2\\
&= \left\{\alpha^2(e_7-e_8)\right\}^2\\
&= \left\{\lambda^4\xi^2(e_7 - e_8)\right\}^2\\
&= -\lambda^8\xi^4 e_{13}.
\end{split}
\]
Since the above element is non-zero, we have proved the second assertion.

To prove the last assertion, suppose that $g \colon A(\lambda,\xi) \to A(\lambda',\xi')$ is a Hom-algebra isomorphism.  Write
\[
g(e_i) = \sum_j \eta_{ij} e_j, \quad\alpha = \alpha_{\lambda,\xi}, \quad\text{and}\quad \alpha' = \alpha_{\lambda',\xi'}.
\]
Comparing $g\alpha(e_1)$ with $\alpha'g(e_1)$, we have that
\[
\lambda\left(\sum\eta_{1j}e_j\right) = \sum \eta_{1j}\alpha'(e_j) = \sum \eta_{1j}\lambda'^r\xi'^s e_j.
\]
Since at least one $\eta_{1j}$ is non-zero, we infer that
\[
\lambda = \lambda'^r\xi'^s
\]
for some $r \in \{0,\ldots,4\}$ and $s \in \{0,1,2\}$.  If we use $e_2$ instead of $e_1$, then an analogous computation shows that
\[
\xi = \lambda'^r\xi'^s
\]
for some $r \in \{0,\ldots,4\}$ and $s \in \{0,1,2\}$.  By symmetry we infer that both $\lambda'$ and $\xi'$ have the form $\lambda^r\xi^s$.  This proves the third assertion.

\section{Proof of the main result}
\label{sec:main}

The purpose of this section is to prove Theorem \ref{thm:mikheev}.  Our proof is modeled on the proof of the Mikheev identity given in \cite{zsss} (Chapter 16).

To simplify the typography, let us first extend the Herstein notations \cite{herstein} to Hom-algebras.  As in most of the classic literature, we allow linear maps, including the twisting map of a Hom-algebra, to act from the right.

\begin{definition}
\label{def:herstein}
Let $(A,\mu,\alpha)$ be a Hom-algebra, $a,b$ be elements in $A$, and $n$ be a non-negative integer.  Define:
\begin{enumerate}
\item
$a_n$ to be $\alpha^n(a)$;
\item
$a'$ to be the operator of right multiplication by $a$;
\item
$a^b$ and $a_b$ to be the operators
\begin{equation}
\label{ab}
a^b = \alpha (ab)' - a'b_1' \andspace
a_b = \alpha (ab)' - b'a_1',
\end{equation}
respectively.
\end{enumerate}
\end{definition}

Since these operators act from the right, in terms of an element $x \in A$, we have
\begin{equation}
\label{herstein}
\begin{split}
xa' &= xa,\\
xa^b &= \alpha(x)(ab) - (xa)\alpha(b) = -(x,a,b),\\
xa_b &= \alpha(x)(ab) - (xb)\alpha(a) = -(x,a,b) + (xa)\alpha(b) - (xb)\alpha(a),
\end{split}
\end{equation}
where $(,,)$ is the Hom-associator in $A$ \eqref{homassociator}.

\begin{proof}[Proof of Theorem \ref{thm:mikheev}]
Pick arbitrary elements $a$ and $b$ in $A$, and write $p = (a,a,b)$.  Then we have
\begin{equation}
\label{4}
p = -(-a,a,b) = -aa^b
\end{equation}
by \eqref{herstein}.  To prove the desired identity \eqref{mikheev}, we compute as follows:
\[
\begin{split}
\alpha^6\left[(a,a,b)^4\right] &= \alpha^6\left\{[(pp)\alpha(p)]\alpha^2(p)\right\} \quad\text{(by \eqref{x4})}\\
&= pp'p_1'p_2'\alpha^6 \quad\text{(Definition \ref{def:herstein})}\\
&= -aa^bp'p_1'p_2'\alpha^6 \quad\text{(by \eqref{4})}.
\end{split}
\]
In Proposition \ref{prop:0} below, we will show that the operator in the previous line, $a^bp'p_1'p_2'\alpha^6$, is $0$.  Therefore, the proof of Theorem \ref{thm:mikheev} will be complete once Proposition \ref{prop:0} is proved.
\end{proof}

The rest of this section is devoted to the proof of Proposition \ref{prop:0} below, which involves a series of preliminary lemmas regarding the operators $a'$, $a^b$, and $a_b$.  Briefly, the proof of Proposition \ref{prop:0} is divided into two parts:
\begin{enumerate}
\item
The first part consists of showing that the operator $a^bp'p_1'p_2'\alpha^6$ is the sum of two operators \eqref{d+e}.
\item
The second part consists of showing that each summand in \eqref{d+e} is trivial (Lemmas \ref{lem:d} and \ref{lem:e}).
\end{enumerate}

Let us begin with the linearity and multiplicative properties of the operators defined above.

\begin{lemma}
\label{lem1}
Let $(A,\mu,\alpha)$ be a Hom-algebra, $a,b$, and $c$ be elements in $A$, and $n$ and $m$ be non-negative integers.  Then the following statements hold.
\begin{enumerate}
\item
$(a+b)_n = a_n + b_n$ and $(a_n)_m = a_{n+m}$.
\item
The operators $a'$, $a^b$, and $a_b$ are linear in both $a$ and $b$.
\item
If $A$ is multiplicative, then
\begin{equation}
\label{0}
\begin{split}
(ab)_n &= a_nb_n, \quad (a,b,c)_n = (a_n,b_n,c_n)\\
a'\alpha^n &= \alpha^n a_n', \quad
a^b \alpha^n = \alpha^n a_n^{b_n},\andspace
a_b \alpha^n = \alpha^n {a_n}_{b_n}.
\end{split}
\end{equation}
\end{enumerate}
\end{lemma}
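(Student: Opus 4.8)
The plan is to prove each of the three groups of statements in Lemma \ref{lem1} directly from the definitions, testing each operator identity by applying it to an arbitrary element $x \in A$ and unraveling via \eqref{herstein}, \eqref{ab}, and the multiplicativity hypothesis $\alpha\mu = \mu\alpha^{\otimes 2}$. Since all of these operators act from the right and are built from $\mu$ and $\alpha$, which are respectively bilinear and linear, parts (1) and (2) are essentially bookkeeping. For part (1), the additivity $(a+b)_n = a_n + b_n$ is just linearity of $\alpha^n$, and $(a_n)_m = a_{n+m}$ is the composition law $\alpha^m\alpha^n = \alpha^{n+m}$ applied to $a$; here I would note that these statements require no multiplicativity at all. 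For part (2), linearity of $a'$ in $a$ is bilinearity of $\mu$ in its second slot, and linearity of $a^b$ and $a_b$ in both $a$ and $b$ follows from bilinearity of $\mu$ together with linearity of $\alpha$ (hence of $a_1 = \alpha(a)$, $b_1 = \alpha(b)$) in their respective arguments, reading off the formulas in \eqref{ab}.

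The substance is in part (3), which is where multiplicativity enters. First I would establish $(ab)_n = a_nb_n$, i.e. $\alpha^n(\mu(a,b)) = \mu(\alpha^n(a),\alpha^n(b))$; this is immediate by iterating $\alpha\mu = \mu\alpha^{\otimes 2}$ exactly $n$ times. The identity $(a,b,c)_n = (a_n,b_n,c_n)$ then follows by applying $\alpha^n$ to the Hom-associator \eqref{homassociator} and distributing it over the two products using $(ab)_n = a_nb_n$ and $\alpha^n\alpha = \alpha\alpha^n$; explicitly
\[
\alpha^n\bigl((xy)\alpha(z) - \alpha(x)(yz)\bigr)
= (x_ny_n)\alpha(z_n) - \alpha(x_n)(y_nz_n) = (x_n,y_n,z_n).
\]
For the three operator identities in \eqref{0} I would verify each by applying both sides to an arbitrary $x$. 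For $a'\alpha^n = \alpha^n a_n'$ one computes $x(a'\alpha^n) = \alpha^n(xa) = \alpha^n(\mu(x,a))$ while $x(\alpha^n a_n') = \mu(\alpha^n(x),\alpha^n(a)) = \mu(\alpha^n(x), a_n)$, and these agree by $(xa)_n = x_na_n$. The same strategy handles $a^b\alpha^n = \alpha^n a_n^{b_n}$ and $a_b\alpha^n = \alpha^n {a_n}_{b_n}$: using $xa^b = -(x,a,b)$ from \eqref{herstein}, the left side gives $\alpha^n(-(x,a,b))$, the right side gives $-(x_n,a_n,b_n)$, and these match by the associator identity $(a,b,c)_n = (a_n,b_n,c_n)$ just proved. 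The operator $a_b$ is handled identically, expanding $xa_b$ via \eqref{herstein} and applying $\alpha^n$ term by term, where the extra products $(xa)\alpha(b)$ and $(xb)\alpha(a)$ transform correctly because $\alpha^n$ commutes with $\mu$ and with $\alpha$.

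I do not anticipate a genuine obstacle, as every step is a direct consequence of linearity, bilinearity, and the single multiplicativity relation; the only thing requiring care is the convention that operators act from the right, so that in a composite such as $a'\alpha^n$ the map $a'$ is applied before $\alpha^n$, and one must be consistent in reading \eqref{herstein} and \eqref{0} with this ordering. The mildly delicate point, and the one I would present most carefully, is the derivation of the operator identities in \eqref{0} from the pointwise identities: one must observe that two operators on $A$ are equal precisely when they agree on every $x \in A$, and then use the already-established element-level formulas $(xa)_n = x_na_n$ and $(x,a,b)_n = (x_n,a_n,b_n)$ as the bridge between the two sides. Once these pointwise evaluations are written out, the operator identities follow immediately.
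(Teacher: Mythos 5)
Your proposal is correct and matches the paper's own argument essentially step for step: parts (1) and (2) from linearity of $\alpha^n$ and bilinearity of $\mu$, and part (3) by first noting that $\alpha^n$ is multiplicative and then verifying each operator identity pointwise on an arbitrary $x$ via \eqref{herstein}. The only difference is that you spell out the right-action convention and the pointwise-to-operator passage more explicitly than the paper does, which is harmless.
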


\begin{proof}
The first assertion simply says that $\alpha^n$ is linear and $\alpha^n\alpha^m = \alpha^{n+m}$.  The linearity of the three operators follows from the definition \eqref{herstein}.  Note that if $A$ is multiplicative, then $\alpha^n$ is multiplicative with respect to $\mu$, which implies the first line in \eqref{0}.  For the second line in \eqref{0}, we use \eqref{herstein} and compute as follows:
\[
\begin{split}
xa'\alpha^n &= \alpha^n(xa) = \alpha^n(x)\alpha^n(a) = x\alpha^n a_n',\\
xa^b\alpha^n &= -\alpha^n(x,a,b) = -(\alpha^n(x),\alpha^n(a),\alpha^n(b)) = x\alpha^n a_n^{b_n}.
\end{split}
\]
A similar calculation proves the last identity in \eqref{0}.
\end{proof}

The following result gives the operator forms of the right alternative identity \eqref{rhomalt} and the right Hom-Moufang identity \eqref{rightmoufang}.

\begin{lemma}
\label{lem2}
Let $(A,\mu,\alpha)$ be a right Hom-alternative algebra.  Then the identities
\begin{subequations}
\begin{align}
a'a_1' &= \alpha (a^2)',\label{1}\\
a^a &= 0 = a^b + b^a \label{3}
\end{align}
\end{subequations}
hold in $A$.  If, in addition, $A$ is multiplicative, then the identities
\begin{subequations}
\begin{align}
a' b_1' a_2' &= \alpha^2 \left[(ab)a_1\right]',\label{2}\\
a' b_1' c_2' + c' b_1' a_2' &= \alpha^2 \left[(ab)c_1 + (cb)a_1\right]' \label{2'}
\end{align}
\end{subequations}
hold in $A$.
\end{lemma}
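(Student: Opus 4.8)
The plan is to read each of the four identities as the operator translation of an element-level fact, tested against an arbitrary $x \in A$ through the formulas in \eqref{herstein}, and to obtain the two polarized identities \eqref{3} and \eqref{2'} by linearizing the unpolarized identities they sit beside.

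For \eqref{1}, I would apply the left-hand operator to an arbitrary $x$ and read the product from left to right (recall the operators act on the right), obtaining $x a' a_1' = (xa)\alpha(a)$. Right Hom-alternativity in the form \eqref{xyy} rewrites this as $\alpha(x)(aa) = x\,\alpha(a^2)'$, and since $x$ is arbitrary the operator identity \eqref{1} follows. For the first half of \eqref{3}, the formula $x a^a = -(x,a,a)$ from \eqref{herstein} vanishes by right Hom-alternativity, so $a^a = 0$; the second half is its polarization. Replacing $a$ by $a+b$ in $a^a = 0$ and invoking the linearity of the operator $a^b$ in both arguments (Lemma \ref{lem1}(2)) expands $(a+b)^{a+b}$ into $a^a + a^b + b^a + b^b$, where the two pure terms are already zero, leaving $a^b + b^a = 0$. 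One may equally read this straight off the linearized alternativity \eqref{righthomalt}.

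The remaining two identities use multiplicativity. For \eqref{2}, evaluating the left-hand operator gives $x a' b_1' a_2' = [(xa)\alpha(b)]\alpha^2(a)$, which is exactly the left-hand side of the right Hom-Moufang identity \eqref{rightmoufang} with $(y,z) = (a,b)$; that identity (Lemma \ref{lem:moufang}) rewrites it as $\alpha^2(x)[(ab)\alpha(a)] = x\,\alpha^2[(ab)a_1]'$, giving \eqref{2}. Finally \eqref{2'} is the polarization of \eqref{2} in $a$: substituting $a \mapsto a+c$, expanding both sides by the linearity provided by Lemma \ref{lem1}(2) together with the distributivity of $\mu$, and subtracting the two pure instances $a' b_1' a_2' = \alpha^2[(ab)a_1]'$ and $c' b_1' c_2' = \alpha^2[(cb)c_1]'$ of \eqref{2} leaves precisely the cross terms recorded in \eqref{2'}.

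The calculations are short, so the only real hazard is bookkeeping the right-action convention---in particular remembering that in a product such as $\alpha^2[(ab)a_1]'$ the map $\alpha^2$ acts on $x$ first and right multiplication by $(ab)a_1$ second---and confirming that multiplicativity enters only through the Moufang step \eqref{2} (and hence \eqref{2'}), consistent with the hypotheses that separate the first pair of identities from the second.
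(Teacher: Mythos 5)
Your proposal is correct and follows essentially the same route as the paper: \eqref{1} and \eqref{2} are read off as the operator forms of right Hom-alternativity \eqref{xyy} and the right Hom-Moufang identity \eqref{rightmoufang}, respectively, while \eqref{3} and \eqref{2'} are obtained by the same linearizations in $a$. The evaluations against an arbitrary $x$ under the right-action convention are all handled correctly.
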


\begin{proof}
The right Hom-alternative identity \eqref{rhomalt} expands to
\[
(xa)\alpha(a) = \alpha(x)(aa).
\]
The identity \eqref{1} is the operator form of the previous identity.  By right Hom-alternativity and \eqref{herstein} we have
\[
0 = -(x,a,a) = xa^a,
\]
which proves the first identity in \eqref{3}.  The other identity in \eqref{3} is obtained from the first identity by linearization, i.e., by replacing $a$ with $a+b$ and using the linearity of $a^b$ in both the $a$ and the $b$ variables (Lemma \ref{lem1}).  The identity \eqref{2} is the operator form of the right Hom-Moufang identity \eqref{rightmoufang}.  The identity \eqref{2'} is obtained from \eqref{2} by replacing $a$ with $a+c$.
\end{proof}

\emph{For the rest of this section, $(A,\mu,\alpha)$ denotes an arbitrary multiplicative right Hom-alternative algebra.  Also let $p$ denote the Hom-associator $(a,a,b)$.  By multiplicativity we have
\[
p_n = \alpha^n(p) = (a_n,a_n,b_n).
\]}

The following result provides a situation in which the composition of $a^b$ and $c_d$ is trivial.

\begin{lemma}
\label{lem3}
The identities
\begin{subequations}
\begin{align}
a^b({a_2}_{b_2}) &= 0,\label{5}\\
a^b({a_2}_{c_2}) + a^c({a_2}_{b_2}) &= 0\label{5'}
\end{align}
\end{subequations}
hold in $A$.
\end{lemma}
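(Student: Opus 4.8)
The plan is to obtain \eqref{5'} from \eqref{5} by linearization and to prove \eqref{5} by evaluating the composite operator on an arbitrary element. For the first reduction I replace $b$ by $b+c$ in \eqref{5} and use that $a^b$ is linear in $b$ and that ${a_2}_{b_2}$ is linear in $b$ (Lemma \ref{lem1}), together with $(b+c)_2 = b_2 + c_2$; expanding, the left side becomes $a^b({a_2}_{b_2}) + a^b({a_2}_{c_2}) + a^c({a_2}_{b_2}) + a^c({a_2}_{c_2})$, and the two diagonal terms vanish by \eqref{5}, leaving exactly \eqref{5'}. Hence it suffices to prove \eqref{5}.

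To prove \eqref{5} I evaluate $a^b({a_2}_{b_2})$ on an arbitrary $x \in A$. By \eqref{herstein} the first factor gives $xa^b = -(x,a,b)$; writing $w = (x,a,b)$ and applying the subscript operator via \eqref{herstein} and multiplicativity \eqref{0}, I obtain
\[
x\,a^b({a_2}_{b_2}) = -\alpha(w)(a_2 b_2) + (w\,b_2)\,\alpha(a_2) = \big(w\,\alpha^2(b)\big)\alpha^3(a) - \alpha(w)\,\alpha^2(ab).
\]
Thus \eqref{5} is equivalent to the single element identity
\[
\big((x,a,b)\,\alpha^2(b)\big)\,\alpha^3(a) = \alpha(x,a,b)\,\alpha^2(ab),
\]
which is the Hom-analogue of the classical right-alternative identity $((x,a,b)b)a = (x,a,b)(ab)$, and is the crux of the argument.

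To establish this crux I would first use \eqref{righthomalt} to write $(x,a,b) = -(x,b,a)$ and then \eqref{xyyz} to rewrite the left factor as a single Hom-associator carrying a product in its third slot, namely $(x,a,b)\,\alpha^2(b) = -(x,b,a)\,\alpha^2(b) = -(\alpha(x),\alpha(b),ba)$, thereby lowering by one the number of free multiplications that must be controlled. I would then reorganize the remaining nested triple products by repeated use of the right Hom-Moufang identity \eqref{rightmoufang}, in the guise $\big((ua)\alpha(b)\big)\alpha^2(a) = \alpha^2(u)\big((ab)\alpha(a)\big)$, together with right Hom-alternativity \eqref{xyy} to collapse products with a repeated argument, and multiplicativity to move the powers of $\alpha$ to the outside. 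This final step is where I expect the genuine difficulty to lie: unlike the outer reduction, which uses only right Hom-alternativity, closing the crux requires the full strength of the Moufang identity \eqref{rightmoufang}, and the bookkeeping of the $\alpha$-levels is delicate, since both \eqref{rightmoufang} and \eqref{xyyz} change the power of $\alpha$ attached to $x$; one must therefore apply them in a balanced way so that everything recombines to $\alpha(x,a,b)\,\alpha^2(ab)$. Once the crux is proved, \eqref{5}, and hence \eqref{5'}, follow as above.
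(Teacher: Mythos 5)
Your reduction steps are all correct: the linearization argument for \eqref{5'} is exactly the paper's, and evaluating $a^b({a_2}_{b_2})$ on an element $x$ via \eqref{herstein} and multiplicativity does show that \eqref{5} is equivalent to the element identity
\[
\bigl((x,a,b)\,\alpha^2(b)\bigr)\,\alpha^3(a) \;=\; \alpha\bigl((x,a,b)\bigr)\,\alpha^2(ab),
\]
and your first rewriting $(x,a,b)\alpha^2(b) = -(\alpha(x),\alpha(b),ba)$ via \eqref{righthomalt} and \eqref{xyyz} is also valid. The problem is that this is where the proof stops. The displayed identity \emph{is} the entire content of \eqref{5}; everything before it is bookkeeping. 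You describe the tools you would use to close it (the right Hom-Moufang identity, \eqref{xyy}, multiplicativity) and explicitly concede that the balancing of the $\alpha$-levels is delicate and has not been carried out. A named but unexecuted plan for the crux is a genuine gap, not a proof --- especially since after your rewriting the left side becomes $-(\alpha(x),\alpha(b),ba)\,\alpha^3(a)$, whose expansion $-\bigl(\alpha(xb)\,\alpha(ba)\bigr)\alpha^3(a) + \bigl(\alpha^2(x)(\alpha(b)(ba))\bigr)\alpha^3(a)$ is not of the shape $\bigl((uv)\alpha(w)\bigr)\alpha^2(v)$ demanded by \eqref{rightmoufang} (the variable repeated in the second and fourth slots is $b$ in one place and $a$ in the other), so it is not evident that your intended sequence of moves terminates.

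For comparison, the paper never leaves the operator level: it expands both factors by \eqref{ab} into four products of right-multiplication operators,
\[
a^b({a_2}_{b_2}) = \alpha(ab)'\alpha(ab)_2' + a'b_1'b_2'a_3' - \alpha(ab)'b_2'a_3' - a'b_1'\alpha(ab)_2',
\]
collapses each term to the form $\alpha^3(\,\cdot\,)'$ using the operator form \eqref{1} of right Hom-alternativity together with \eqref{2} and its linearization \eqref{2'} (the operator forms of the right Hom-Moufang identity), and then observes via \eqref{xyy} that the four resulting elements cancel in pairs. If you want to salvage your element-level route, the cleanest fix is to expand $(x,a,b)$ as $(xa)\alpha(b) - \alpha(x)(ab)$ in your crux identity and run the analogous four-term cancellation; as written, your argument is incomplete.
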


\begin{proof}
The identity \eqref{5'} is the linearization of the identity \eqref{5} with respect to $b$.  To prove \eqref{5}, we compute as follows:
\[
\begin{split}
a^b({a_2}_{b_2})
&= \left\{\alpha (ab)' - a'b_1'\right\} \left\{\alpha (a_2 b_2)' - b_2' a_3'\right\} \quad\text{(by \eqref{ab})}\\
&= \alpha (ab)' \alpha (ab)_2' + a'b_1'b_2'a_3' - \left\{\alpha (ab)' b_2' a_3' + a'b_1'\alpha (ab)_2'\right\}\\
&= \alpha^2 (ab)_1'(ab)_2' + a'\alpha(b_1^2)'a_3' - \left\{\alpha (ab)' b_2' a_3' + \alpha a_1'b_2'(ab)_2'\right\} \quad\text{(by \eqref{0} and \eqref{1})}\\
&= \alpha^3[(ab)_1^2]' + \alpha a_1'(b_1^2)'a_3'\\
&\relphantom{} - \alpha^3\left\{\left[(ab)b_1\right]a_2 + (a_1b_1)(ab)_1\right\}' \quad\text{(by \eqref{0}, \eqref{1}, and \eqref{2'})}\\
&= \alpha^3[(ab)_1^2]' + \alpha^3[(a_1b^2)a_2]' - \alpha^3\left\{(a_1b^2)a_2 + (ab)_1^2\right\}' \quad\text{(by \eqref{xyy} and \eqref{2})}\\
&= 0.
\end{split}
\]
This finishes the proof of the identity \eqref{5}.
\end{proof}

The following result expresses the operator $\alpha^2 p_k'$ in terms of the operator $c^d$.

\begin{lemma}
\label{lem8}
For each integer $k \geq 0$, the identity
\begin{equation}
\label{10}
\alpha^2p_k' = \alpha a_{k+1}^{(ba)_k} - a_k' a_{k+1}^{b_{k+1}}
\end{equation}
holds in $A$.
\end{lemma}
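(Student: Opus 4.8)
The plan is to reduce \eqref{10} to its $k=0$ instance and then to run a direct operator computation in the style of Lemma~\ref{lem3}. For the reduction, I would observe that the $k=0$ case of \eqref{10}, namely $\alpha^2 p' = \alpha a_1^{(ba)} - a' a_1^{b_1}$ with $p=(a,a,b)$, is an identity valid for \emph{every} pair of elements of $A$. Applying it to the elements $a_k$ and $b_k$ in place of $a$ and $b$, and invoking multiplicativity in the forms $(a_k,a_k,b_k)=p_k$, $\alpha(a_k)=a_{k+1}$, $\alpha(b_k)=b_{k+1}$, and $b_k a_k=(ba)_k$, converts the $k=0$ identity verbatim into \eqref{10}. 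Thus it suffices to treat $k=0$.

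For the $k=0$ case, I would expand the two operators on the right-hand side using the definition \eqref{ab}. Writing $a_1^{b_1}=\alpha(ab)_1'-a_1'b_2'$ and $a_1^{(ba)}=\alpha(a_1(ba))'-a_1'(b_1a_1)'$, then composing with the leading $\alpha$ and with $a'$ respectively and simplifying via the commutation rules \eqref{0} (to move the twisting map past the right multiplications) together with the operator form \eqref{1} of right Hom-alternativity, I expect to reach an expression of the shape $\alpha^2(a_1(ba))'-\alpha a_1'(b_1a_1)'-\alpha a_1'(ab)_1'+\alpha(a^2)'b_2'$. The goal is then to recognize this as $\alpha^2 p'=\alpha^2\left[(a^2)b_1-a_1(ab)\right]'$, where $p=(a,a,b)$ by \eqref{homassociator}.

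The main obstacle is precisely this final matching, and it is where the right Hom-Moufang identity must enter. The three two-fold products above do not occur in the staircase index pattern $c'd_1'e_2'$ demanded by \eqref{2} and \eqref{2'}, so one must first use \eqref{0} to redistribute the twisting maps and then use \eqref{xyy} (right Hom-alternativity in expanded form) to rewrite the mixed products $a_1(ba)$ and $(a^2)b_1$ into the correct staircase shape, after which \eqref{2'} collapses them. Setting $\alpha=Id$, the computation reduces to the classical identity $(x,a,b)a=(x,a,ab)$ in a right alternative algebra, which follows from the right Moufang relation \eqref{2} (reading $a'b'a'=[(ab)a]'$ when $\alpha=Id$) together with the linearized skew-symmetry \eqref{righthomalt} of the associator; the twisted form of this step, carried by \eqref{2} and \eqref{2'}, is the crux of the argument, while everything else is careful $\alpha$-bookkeeping.
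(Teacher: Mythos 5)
Your proposal is correct and follows essentially the same route as the paper: the same reduction to $k=0$ by substituting $a_k,b_k$ and invoking multiplicativity, and the same toolkit (\eqref{ab}, \eqref{0}, \eqref{1}, the linearized law \eqref{righthomalt}, and the right Hom-Moufang identities \eqref{2}, \eqref{2'}), merely run from the right-hand side toward the left. The paper's forward direction is slightly slicker --- it applies $(a,a,b)=-(a,b,a)$ at the outset, so that $\alpha^2[(ab)a_1]'$ lands directly in the staircase form $a'b_1'a_2'$ of \eqref{2} and one application of \eqref{3} finishes the job --- whereas your direction defers exactly that content to the ``final matching,'' which you correctly identify (in the $\alpha=Id$ case it is $(x,a,b)a=(x,a,ab)$, i.e., \eqref{xyyz}) but leave only sketched.
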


\begin{proof}
To prove \eqref{10}, it suffices to prove the case $k=0$:
\begin{equation}
\label{10b}
\alpha^2(a,a,b)' = \alpha a_1^{ba} - a'a_1^{b_1}
\end{equation}
Indeed, the desired identity \eqref{10} is obtained from \eqref{10b} by replacing $a$ and $b$ by $a_k$ and $b_k$, respectively.  To prove \eqref{10b}, we compute as follows:
\[
\begin{split}
\alpha^2(a,a,b)'
&= -\alpha^2 (a,b,a)' \quad\text{(by \eqref{righthomalt})}\\
&= \alpha^2 \left[a_1(ba)\right]' - \alpha^2\left[(ab)a_1\right]' \quad\text{(by \eqref{homassociator})}\\
&= \alpha^2\left[a_1(ba)\right]' - a'b_1'a_2' \quad\text{(by \eqref{2})}\\
&= \alpha^2\left[a_1(ba)\right]' - a'\left\{a_1^{b_1} + \alpha(b_1a_1)'\right\} \quad\text{(by \eqref{3})}\\
&= \alpha\left\{\alpha[a_1(ba)]' - a_1'(ba)_1'\right\} - a'a_1^{b_1} \quad\text{(by \eqref{0})}\\
&= \alpha a_1^{ba} - a'a_1^{b_1} \quad\text{(by \eqref{ab})}
\end{split}
\]
This finishes the proof of the identity \eqref{10b}.
\end{proof}

The following result expresses the operator $\alpha^2p_k'$ in terms of the operator $c_d$.

\begin{lemma}
\label{lem9}
For each integer $k \geq 0$, the identity
\begin{equation}
\label{10'}
\alpha^2p_k' = \alpha {a_{k+1}}_{(ba)_k} - {a_k}_{b_k} a_{k+2}'
\end{equation}
holds in $A$.
\end{lemma}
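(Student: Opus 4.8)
The goal is to establish Lemma \ref{lem9}, which expresses $\alpha^2 p_k'$ using the operator $c_d$ rather than the $c^d$ used in Lemma \ref{lem8}. The plan is to mirror the structure of the proof of Lemma \ref{lem8} exactly, since the two lemmas are formally parallel: the only difference is that the operator $a^b = \alpha(ab)' - a'b_1'$ has its ``inner'' factors in one order, while $a_b = \alpha(ab)' - b'a_1'$ has them swapped, and correspondingly the decomposition of the double-product operator in \eqref{2} will be applied to peel off a factor on the opposite side.

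As in Lemma \ref{lem8}, it suffices to treat the case $k=0$, namely
\begin{equation*}
\alpha^2(a,a,b)' = \alpha {a_1}_{(ba)} - a_b\, a_2',
\end{equation*}
because replacing $a$ and $b$ by $a_k$ and $b_k$ and using multiplicativity \eqref{0} promotes this to general $k$. First I would rewrite $\alpha^2(a,a,b)'$ using right Hom-alternativity \eqref{righthomalt} as $-\alpha^2(a,b,a)'$ and then expand via the Hom-associator definition \eqref{homassociator}, obtaining $\alpha^2[a_1(ba)]' - \alpha^2[(ab)a_1]'$. The second term is again exactly the right Hom-Moufang operator \eqref{2}, so it equals $a'b_1'a_2'$. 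The essential step is then to regroup this triple product so that a factor of $a_2'$ sits on the right and the remaining $a'b_1'$ assembles into $a_b$ rather than $a^b$; concretely, using the definition \eqref{ab} of $a_b$ together with \eqref{0} to move $\alpha$ across, I would show $a'b_1'a_2' = a_b\, a_2' + \alpha\{\alpha[(ab)a_1]' - a_1'(ba)_1'\}$ or the analogous rearrangement, so that the leftover piece combines with $\alpha^2[a_1(ba)]'$ to produce $\alpha {a_1}_{(ba)}$ via \eqref{ab}.

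The main obstacle is bookkeeping rather than any genuinely new idea: I must be careful to apply the defining relation $a_b = \alpha(ab)' - b'a_1'$ with the factors in the correct order, since the asymmetry between $a^b$ and $a_b$ means the cancellation that worked in Lemma \ref{lem8} will only go through if the splitting of the triple product $a'b_1'a_2'$ is done on the correct side. In particular, where Lemma \ref{lem8} used identity \eqref{3} (namely $a^b = -b^a$) to convert $b_1'a_2'$ into an $a_1^{b_1}$ term, here I expect instead to split off $a_2'$ directly and recognize $a_b$ from the front two factors, appealing to multiplicativity \eqref{0} to shift the twisting map $\alpha$ outside. Assembling the two surviving terms into $\alpha {a_1}_{(ba)} - a_b\, a_2'$ by a final application of \eqref{ab} then completes the argument.
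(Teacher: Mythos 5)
Your proposal matches the paper's proof essentially step for step: reduce to $k=0$, rewrite $\alpha^2(a,a,b)'$ as $\alpha^2[a_1(ba)]' - a'b_1'a_2'$ via \eqref{righthomalt} and \eqref{2}, then split off $a_2'$ and reassemble the remaining pieces into $\alpha\,{a_1}_{ba} - a_b a_2'$ using \eqref{ab}. The one detail to correct is your suggestion that \eqref{3} is not needed here: since $a_b = \alpha(ab)' - b'a_1'$ has the primed factors in the opposite order from the front factors $a'b_1'$ of the triple product, you still must invoke \eqref{3} (equivalently the linearization of \eqref{1}) to get $a'b_1' = \alpha(ba)' + a_b$ — exactly as the paper does — after which the leftover $\alpha(ba)'a_2'$ (not the garbled expression you display) combines with $\alpha^2[a_1(ba)]'$ to give $\alpha\,{a_1}_{ba}$.
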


\begin{proof}
To prove \eqref{10'}, it suffices to prove the case $k=0$:
\begin{equation}
\label{10''}
\alpha^2(a,a,b)' = \alpha {a_1}_{ba} - a_b a_2'.
\end{equation}
Indeed, the desired identity \eqref{10'} is obtained from \eqref{10''} by replacing $a$ and $b$ by $a_k$ and $b_k$, respectively.  To prove \eqref{10''}, we compute as follows:
\[
\begin{split}
\alpha^2(a,a,b)'
&= \alpha^2\left[a_1(ba)\right]' - a'b_1'a_2' \quad
\text{(by the proof of Lemma \ref{lem8})}\\
&= \alpha^2\left[a_1(ba)\right]' - \left\{\alpha(ba)' + a_b\right\}a_2' \quad\text{(by \eqref{ab} and \eqref{3})}\\
&= \alpha\left\{\alpha[a_1(ba)]' - (ba)'a_2'\right\} - a_ba_2'\\
&= \alpha{a_1}_{ba} - a_ba_2' \quad\text{(by \eqref{ab})}.
\end{split}
\]
This finishes the proof of \eqref{10''}.
\end{proof}

The following result shows that the operator in Proposition \ref{prop:0} splits into two operators.

\begin{lemma}
\label{lem:d+e}
The identity
\begin{equation}
\label{d+e}
\begin{split}
a^bp'p_1'p_2'\alpha^6
&= -a^b \alpha \left({a_3}_{(ba)_2}\right) \alpha \left(a_6^{(ba)_5}\right) \left({a_8}_{b_8}\right) a_{10}'\\
&\relphantom{} - a^b \alpha \left({a_3}_{(ba)_2}\right) a_5' \left(a_6^{b_6}\right) \alpha \left({a_9}_{(ba)_8}\right)
\end{split}
\end{equation}
holds in $A$.
\end{lemma}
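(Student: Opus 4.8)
The plan is to remove the trailing $\alpha^6$ by redistributing it as three factors of $\alpha^2$, one for each of $p'$, $p_1'$, $p_2'$, and then to feed the resulting operators $\alpha^2 p_k'$ into Lemmas \ref{lem8} and \ref{lem9}, which rewrite them in terms of the operators $c^d$ and $c_d$ that appear in \eqref{d+e}.

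First I would use the commutation relations \eqref{0} to push the powers of $\alpha$ leftward, checking that
\[
a^b p'p_1'p_2'\alpha^6 = a^b(\alpha^2 p_2')(\alpha^2 p_5')(\alpha^2 p_8').
\]
This is a routine bookkeeping exercise: applying $a'\alpha^n = \alpha^n a_n'$ and $a^b\alpha^n = \alpha^n a_n^{b_n}$ repeatedly, both sides collapse to the common normal form $\alpha^6 a_6^{b_6}p_6'p_7'p_8'$. The gain is that each factor $\alpha^2 p_k'$ now sits in exactly the shape treated by Lemmas \ref{lem8} and \ref{lem9}.

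Next I would expand the three factors. To the leftmost one I apply Lemma \ref{lem9} with $k=2$, and then invoke \eqref{5}: because $a^b({a_2}_{b_2}) = 0$, the term ${a_2}_{b_2}a_4'$ is annihilated and I am left with $a^b(\alpha^2 p_2') = a^b\alpha({a_3}_{(ba)_2})$. To the middle factor I apply Lemma \ref{lem8} with $k=5$, giving $\alpha^2 p_5' = \alpha(a_6^{(ba)_5}) - a_5'(a_6^{b_6})$, and to the last factor Lemma \ref{lem9} with $k=8$, giving $\alpha^2 p_8' = \alpha({a_9}_{(ba)_8}) - ({a_8}_{b_8})a_{10}'$. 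Multiplying these two binomials yields four products; the two cross products, once prefixed by $a^b\alpha({a_3}_{(ba)_2})$, are precisely the two summands of \eqref{d+e}, so the whole task reduces to showing that the two diagonal products vanish.

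This last point is where the only real care is needed. The diagonal product $a_5'(a_6^{b_6})({a_8}_{b_8})a_{10}'$ dies at once, since $a_6^{b_6}({a_8}_{b_8}) = 0$ is just \eqref{5} with $a,b$ replaced by $a_6,b_6$. The other diagonal product $\alpha(a_6^{(ba)_5})\alpha({a_9}_{(ba)_8})$ is the subtle one, and I expect it to be the main obstacle: the cancelling pattern is not visible until I first pull the intervening $\alpha$ to the left using \eqref{0}, which turns the product into $\alpha^2 a_7^{(ba)_6}({a_9}_{(ba)_8})$. At that stage $a_7^{(ba)_6}({a_9}_{(ba)_8}) = 0$ is again an instance of \eqref{5}, now with $a,b$ replaced by $a_7,(ba)_6$. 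With both diagonal products gone, only the two cross products survive, and these are exactly \eqref{d+e}.
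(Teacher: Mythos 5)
Your proposal is correct and follows essentially the same route as the paper: redistribute $\alpha^6$ via \eqref{0} to get $a^b(\alpha^2 p_2')(\alpha^2 p_5')(\alpha^2 p_8')$, expand the first factor by Lemma \ref{lem9} and kill one term with \eqref{5}, expand the remaining two factors by Lemmas \ref{lem8} and \ref{lem9} respectively, and observe that the two diagonal products of the resulting binomials vanish by \eqref{5} (after commuting the intervening $\alpha$ in the $a_6^{(ba)_5}\alpha\,{a_9}_{(ba)_8}$ term). All the instantiations of \eqref{5} you cite are the ones the paper uses, so there is nothing to add.
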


\begin{proof}
Using \eqref{0} repeatedly, we have:
\begin{equation}
\label{de1}
\begin{split}
a^bp'p_1'p_2'\alpha^6
&= a^b p' p_1' \alpha^6 p_8'\\
&= a^b p' \alpha^4 p_5' \alpha^2 p_8'\\
&= a^b \alpha^2 p_2' \alpha^2 p_5' \alpha^2 p_8'.
\end{split}
\end{equation}
We now compute the previous line in two steps.  First, we have:
\begin{equation}
\label{de2}
\begin{split}
a^b\alpha^2 p_2'
&= a^b \alpha^2 (a_2,a_2,b_2)'\\
&= a^b\left\{\alpha {a_3}_{(ba)_2} - {a_2}_{b_2}a_4'\right\} \quad\text{(by \eqref{10'})}\\
&= a^b\alpha {a_3}_{(ba)_2} \quad\text{(by \eqref{5})}.
\end{split}
\end{equation}
Next we have:
\begin{equation}
\label{de3}
\begin{split}
\left(\alpha^2 p_5'\right) \left(\alpha^2 p_8'\right)
&= \left\{\alpha a_6^{(ba)_5} - a_5' a_6^{b_6}\right\}
\left\{\alpha {a_9}_{(ba)_8} - {a_8}_{b_8}a_{10}'\right\} \quad\text{(by \eqref{10} and \eqref{10'})}\\
&= \alpha a_6^{(ba)_5}\alpha {a_9}_{(ba)_8}
+ a_5' a_6^{b_6}{a_8}_{b_8}a_{10}'
- \alpha a_6^{(ba)_5}{a_8}_{b_8}a_{10}'
- a_5' a_6^{b_6}\alpha {a_9}_{(ba)_8}\\
&= \alpha^2 a_7^{(ba)_6}{a_9}_{(ba)_8}
- \alpha a_6^{(ba)_5}{a_8}_{b_8}a_{10}'
- a_5' a_6^{b_6}\alpha {a_9}_{(ba)_8} \quad\text{(by \eqref{0} and \eqref{5})}\\
&= - \alpha a_6^{(ba)_5}{a_8}_{b_8}a_{10}'
- a_5' a_6^{b_6}\alpha {a_9}_{(ba)_8} \quad\text{(by \eqref{5})}.\\
\end{split}
\end{equation}
We now obtain the desired identity \eqref{d+e} by combining \eqref{de1}, \eqref{de2}, and \eqref{de3}.
\end{proof}

Our next objective is to show that each summand on the right-hand side of \eqref{d+e} is equal to $0$.  In order to do that, we need several more auxiliary identities.

In \eqref{5} above it was shown that the operator $a^b({a_2}_{b_2})$ is trivial.  The following result gives a computation of the operator $a_b(a_2^{b_2})$.

\begin{lemma}
\label{lem4}
The identity
\begin{equation}
\label{6}
a_b(a_2^{b_2}) = -\alpha^3\left([a,b], a_1, b_1\right)'
\end{equation}
holds in $A$, where $(,,)$ is the Hom-associator in $A$ \eqref{homassociator} and
\[
[a,b] = ab - ba.
\]
\end{lemma}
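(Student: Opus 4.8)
The plan is to expand the composite operator $a_b(a_2^{b_2})$ into right-multiplication operators and then collapse the result to a single associator operator, exactly along the lines of the computation proving \eqref{5}. First I would record, using \eqref{ab} together with the multiplicativity relation $a_2b_2=(ab)_2$ from \eqref{0}, that $a_b=\alpha(ab)'-b'a_1'$ and $a_2^{b_2}=\alpha(ab)_2'-a_2'b_3'$. Multiplying these two binomials produces four operator products: the two \emph{diagonal} terms $\alpha(ab)'\alpha(ab)_2'$ and $b'a_1'a_2'b_3'$, and the two \emph{cross} terms $-\alpha(ab)'a_2'b_3'$ and $-b'a_1'\alpha(ab)_2'$. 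The goal is to show that the four contributions add up to $-\alpha^3([a,b],a_1,b_1)'$.

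For the diagonal terms I expect a clean reduction. The first, $\alpha(ab)'\alpha(ab)_2'$, collapses by \eqref{0} and the operator form \eqref{1} of right Hom-alternativity to $\alpha^3[(ab)_1^2]'$. For the second, $b'a_1'a_2'b_3'$, I would apply \eqref{1} to the inner pair $a_1'a_2'=\alpha((a^2)_1)'$, then invoke the operator right Hom-Moufang identity \eqref{2} on $b_1'(a^2)_1'b_3'$, and finally rewrite $b_1a^2=(ba)a_1$ via the element form \eqref{xyy} of right Hom-alternativity; this yields $\alpha^3[((ba)a_1)b_2]'$.

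The crux is the pair of cross terms. Their $\alpha$-levels (the elements $ab,a,b$ sit at levels $0,2,3$ in one and $b,a,ab$ at levels $1,2,2$ in the other) do not match the Moufang pattern of \eqref{2}, so neither reduces in isolation. The plan is therefore to treat their sum, reducing it by means of the polarized Moufang identity \eqref{2'} together with \eqref{xyy}, reintroducing and then cancelling associator operators via the linearized alternativity \eqref{3} whenever the index shifts obstruct a direct application of \eqref{2}; I expect the sum to collapse to $-\alpha^3[((ab)a_1)b_2+(b_1a_1)(a_1b_1)]'$. This is the main obstacle: the bookkeeping must be arranged so that every genuine associator operator generated along the way cancels and only a single right-multiplication operator survives.

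Finally, adding the four contributions and factoring out $\alpha^3$, the surviving bracket is $-[((ab)a_1)b_2-((ba)a_1)b_2-(a_1b_1)^2+(b_1a_1)(a_1b_1)]$. Recognizing that $[a,b]_1=a_1b_1-b_1a_1$ and expanding the Hom-associator $([a,b],a_1,b_1)$ via \eqref{homassociator}, one sees that this bracket is precisely $-([a,b],a_1,b_1)$, which establishes the desired identity \eqref{6}. I would also note the shortcut $a_b=-a^b+\alpha[a,b]'$, a consequence of \eqref{ab} and \eqref{3}, which together with \eqref{5} lets one replace the two diagonal terms by known vanishing pieces; this can streamline the organization but does not circumvent the cross-term analysis, which remains the heart of the argument.
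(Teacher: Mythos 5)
Your proposal follows the paper's proof essentially verbatim: the same four-term expansion of $a_b(a_2^{b_2})$, the same reduction of the two diagonal terms via \eqref{0}, \eqref{1}, \eqref{2}, and \eqref{xyy}, the same pairing of the two cross terms so that the polarized Moufang identity \eqref{2'} applies directly (no detour through \eqref{3} is actually needed there), and the same final recombination into $-\alpha^3([a,b],a_1,b_1)'$, with all intermediate expressions matching the paper's. The closing aside about $a_b=-a^b+\alpha[a,b]'$ is inessential and, as stated, does not quite produce ``known vanishing pieces'' for the diagonal terms (\eqref{5} concerns $a^b({a_2}_{b_2})$, not $a^b(a_2^{b_2})$), but your main route is correct.
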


\begin{proof}
Starting from the definitions \eqref{ab}, we compute as follows:
\[
\begin{split}
a_b(a_2^{b_2})
&= \left\{\alpha(ab)' - b'a_1'\right\} \left\{\alpha (ab)_2' - a_2'b_3'\right\}\\
&= \alpha(ab)'\alpha(ab)_2' + b'a_1'a_2'b_3' - \left\{\alpha(ab)'a_2'b_3' + b'a_1'\alpha(ab)_2'\right\}\\
&= \alpha^2(ab)_1'(ab)_2' + b'\alpha(a_1^2)'b_3' - \alpha\left\{(ab)'a_2'b_3' + b_1'a_2'(ab)_2'\right\} \quad\text{(by \eqref{0} and \eqref{1})}\\
&= \alpha^3[(ab)_1^2]' + \alpha b_1'(a_1^2)'b_3'\\
&\relphantom{} - \alpha^3\left\{[(ab)a_1]b_2 + (b_1a_1)(ab)_1\right\}' \quad\text{(by \eqref{0}, \eqref{1}, and \eqref{2'})}\\
&= \alpha^3\left\{(ab)_1(a_1b_1)\right\}' + \alpha^3 \left\{[(ba)a_1]b_2\right\}'\\
&\relphantom{} - \alpha^3\left\{[(ab)a_1]b_2 + (ba)_1(a_1b_1)\right\}' \quad\text{(by \eqref{xyy} and \eqref{2})}\\
&= -\alpha^3\left\{([a,b]a_1)b_2 - [a,b]_1(a_1b_1)\right\}'\\
&= -\alpha^3\left([a,b],a_1,b_1\right)'.
\end{split}
\]
This finishes the proof of the identity \eqref{6}.
\end{proof}

The following result gives a variation of the identity \eqref{6}.

\begin{lemma}
\label{lem5}
The identity
\begin{equation}
\label{7}
a_b a_2' (a_3^{b_3}) = -\alpha^4\left([a,b]a_1, a_2, b_2\right)'
\end{equation}
holds in $A$.
\end{lemma}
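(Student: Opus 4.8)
The plan is to prove Lemma \ref{lem5} by reducing it to Lemma \ref{lem4}, which already establishes the closely related identity $a_b(a_2^{b_2}) = -\alpha^3([a,b],a_1,b_1)'$. The target identity \eqref{7} has the operator $a_b a_2' (a_3^{b_3})$ on the left, whereas \eqref{6} has $a_b(a_2^{b_2})$. So the essential task is to relate the operator $a_2'(a_3^{b_3})$ to $\alpha(a_2^{b_2})$, after which the multiplicativity rule $a^b\alpha^n = \alpha^n a_n^{b_n}$ from \eqref{0} and the compatibility rules for shifting indices should let me transport \eqref{6} into the desired form.

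First I would try to establish an auxiliary operator identity of the form $a'(a_1^{b_1}) = \alpha(a^b)$ or something comparable, expressing how right-multiplication by $a'$ interacts with the operator $a_1^{b_1}$. Concretely, I would expand $a'(a_1^{b_1})$ using the definition \eqref{ab}, namely $a_1^{b_1} = \alpha(a_1b_1)' - a_1'b_2'$, giving $a'(a_1^{b_1}) = a'\alpha(a_1b_1)' - a'a_1'b_2'$. Then I would apply \eqref{1}, which says $a'a_1' = \alpha(a^2)'$, to the second term, and use the multiplicativity relations in \eqref{0} to push the $\alpha$'s outward. The hope is that this collapses to a clean multiple of a single operator of the form $\alpha(\text{something}^{b})$, so that the three-operator product $a_b a_2'(a_3^{b_3})$ becomes $a_b \cdot \alpha(a_2^{b_2})$ up to an index shift.

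The cleanest route, which I would pursue first, is to observe that by \eqref{0} we have $a_2'(a_3^{b_3}) = \alpha^2\bigl(a_0'(a_1^{b_1})\bigr)\alpha^{-2}$-type shifts; more precisely, since every operator in sight is obtained from its $k=0$ version by the substitution $a \mapsto a_k$, $b \mapsto b_k$ together with a power of $\alpha$, I would first prove the $k=0$ case and then apply the shift. So the real content is the single identity $a_b a_2'(a_3^{b_3}) = \alpha\bigl(a_b(a_2^{b_2})\bigr)$ (or the analogue with indices adjusted to match), combined with Lemma \ref{lem4}. Applying \eqref{0} to push $\alpha$ through $a_b$ converts $\alpha a_b(a_2^{b_2})$ into $\alpha \cdot (-\alpha^3([a,b],a_1,b_1)')$, and then the multiplicativity of the Hom-associator in \eqref{0}, namely $(a,b,c)_n = (a_n,b_n,c_n)$, together with $a'\alpha^n = \alpha^n a_n'$, should reassemble the right-hand side as $-\alpha^4([a,b]a_1,a_2,b_2)'$, matching \eqref{7} exactly once the index bookkeeping is carried out.

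The main obstacle will be the index bookkeeping: keeping track of exactly which power of $\alpha$ and which subscript attaches to each factor as operators are moved past one another via \eqref{0}. The substitution rules $a^b\alpha^n = \alpha^n a_n^{b_n}$ and $a'\alpha^n = \alpha^n a_n'$ each raise subscripts when an operator is commuted past $\alpha^n$, so the challenge is purely in confirming that the subscripts on the three left-hand factors $a_b$, $a_2'$, $a_3^{b_3}$ and the extra power of $\alpha$ needed to reach $\alpha^4$ all line up with the factors $[a,b]a_1$, $a_2$, $b_2$ appearing inside the associator on the right-hand side. I expect no genuinely new algebraic input beyond Lemmas \ref{lem1}, \ref{lem2}, and \ref{lem4}, and in particular I do not anticipate needing the Hom-Moufang identity directly in this step, since it was already absorbed into Lemma \ref{lem4}.
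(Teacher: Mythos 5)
There is a genuine gap: the reduction to Lemma \ref{lem4} cannot produce the right-hand side of \eqref{7}. Your plan hinges on an auxiliary identity of the shape $a_2'(a_3^{b_3}) = \alpha(a_2^{b_2})$ (equivalently $a'(a_1^{b_1}) = a^b\alpha$ after shifting), but this is false. Evaluating on $x$ via \eqref{herstein}, $xa'(a_1^{b_1}) = -(xa,a_1,b_1)$, whereas $xa^b\alpha = -\alpha\bigl((x,a,b)\bigr) = -(x_1,a_1,b_1)$ by \eqref{0}; the first arguments $xa$ and $\alpha(x)$ do not agree in general, and no identity in Lemma \ref{lem1} or \ref{lem2} converts one into the other. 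More decisively, even if such a commutation held, your route would yield
\begin{equation*}
a_b a_2'(a_3^{b_3}) = \alpha\bigl(a_b(a_2^{b_2})\bigr) = -\alpha\cdot\alpha^3\bigl([a,b],a_1,b_1\bigr)' = -\alpha^4\bigl([a_1,b_1],a_2,b_2\bigr)',
\end{equation*}
whose first associator slot is $\alpha([a,b])$, whereas \eqref{7} has the \emph{product} $[a,b]a_1 = [a,b]\,\alpha(a)$ in that slot. These are different elements, so the two right-hand sides do not match and no amount of index bookkeeping will reconcile them. The extra middle factor $a_2'$ genuinely multiplies an $a$ into the first argument of the associator rather than merely contributing a power of $\alpha$.

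The paper's actual proof does not go through Lemma \ref{lem4} at all: it expands all four terms of $a_ba_2'(a_3^{b_3})$ from the definitions \eqref{ab} and simplifies using \eqref{0}, \eqref{1}, \eqref{2}, \eqref{2'}, \eqref{x4}, \eqref{xyy}, and, crucially, the right Hom-Moufang identity \eqref{rightmoufang} applied directly (to rewrite terms such as $[(ab)_1a_2](ab)_2$ and $(b_2a^3)b_3$). Your explicit expectation that the Hom-Moufang identity would not be needed "since it was already absorbed into Lemma \ref{lem4}" is the root of the problem: the presence of the interposed $a_2'$ forces a fresh application of \eqref{2} and \eqref{rightmoufang}, and this is exactly what produces the product $[a,b]a_1$ in the answer.
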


\begin{proof}
Starting from the definitions \eqref{ab}, we have:
\[
\begin{split}
a_b a_2' (a_3^{b_3})
&= \alpha(ab)'a_2'\alpha(ab)_3' + b'a_1'a_2'a_3'b_4'
- \left\{\alpha(ab)'a_2'a_3'b_4' + b'a_1'a_2'\alpha(ab)_3'\right\}\\
&= \alpha^2(ab)_1'a_3'(ab)_3' + b'\alpha^2(a_1^2a_2)'b_4'\\
&\relphantom{} - \left\{\alpha(ab)'\alpha(a_2^2)'b_4' + b'\alpha(a_1^2)'\alpha(ab)_3'\right\} \quad\text{(by \eqref{0}, \eqref{1}, and \eqref{2})}\\
&= \alpha^4\left\{[(ab)_1a_2](ab)_2\right\}' + \alpha^2b_2'(a^3_1)'b_4'\\
&\relphantom{} - \alpha^2\left\{(ab)_1'(a_2^2)'b_4' + b_2'(a_2^2)'(ab)_3'\right\} \quad\text{(by \eqref{x4}, \eqref{0}, and \eqref{2})}\\
&= \alpha^4\left\{[(ab)a_1]_1(a_2b_2)\right\}' + \alpha^4\left\{(b_2a^3)b_3\right\}'\\
&\relphantom{} - \alpha^4\left\{[(ab)_1a^2_1]b_3 + (b_2a^2_1)(ab)_2\right\}' \quad\text{(by \eqref{2} and \eqref{2'})}\\
&= \alpha^4\left\{[(ab)a_1]_1(a_2b_2)\right\}' + \alpha^4\left\{([(ba)a_1]a_2)b_3\right\}'\\
&\relphantom{} - \alpha^4\left\{([(ab)a_1]a_2)b_3 + [(ba)a_1]_1(ab)_2\right\}' \quad\text{(by \eqref{xyy} and \eqref{rightmoufang})}\\
&= -\alpha^4\left\{[([a,b]a_1)a_2]b_3 - ([a,b]a_1)_1(a_2b_2)\right\}'\\
&= -\alpha^4\left([a,b]a_1, a_2, b_2\right)'.
\end{split}
\]
This proves the identity \eqref{7}.
\end{proof}

The following result gives situations in which the product of two Hom-associators is trivial.

\begin{lemma}
\label{lem6}
The identities
\begin{equation}
\label{8}
p_3 \left([a_2,b_2], a_3, b_3\right) = 0
\end{equation}
and
\begin{equation}
\label{9}
p_4 \left([a_2,b_2]a_3, a_4, b_4\right) = 0
\end{equation}
hold in $A$.
\end{lemma}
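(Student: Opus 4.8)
The plan is to deduce both \eqref{8} and \eqref{9} by evaluating suitable operator identities at the single element $p$. The engine is the factorization $p = -aa^b$ recorded in \eqref{4}, together with the vanishing of the composite operator $a^b({a_2}_{b_2})$ established in \eqref{5}. The point is that \eqref{8} and \eqref{9} are statements about \emph{elements} (products of $p_3$ or $p_4$ with a Hom-associator), whereas Lemmas \ref{lem4} and \ref{lem5} are statements about \emph{operators}; the two are bridged by applying the operators to $p$.

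First I would record index-shifted versions of \eqref{6} and \eqref{7}. Since those identities hold for all elements, replacing $a$ by $a_2$ and $b$ by $b_2$ and using $\alpha^n(a_2) = a_{n+2}$ gives
\[
{a_2}_{b_2}(a_4^{b_4}) = -\alpha^3\left([a_2,b_2], a_3, b_3\right)', \qquad
{a_2}_{b_2} a_4'(a_5^{b_5}) = -\alpha^4\left([a_2,b_2]a_3, a_4, b_4\right)'.
\]
Because the trailing $\alpha^3$ (resp.\ $\alpha^4$) is composed on the right of a right-multiplication operator, applying either identity to $p$ sends $p$ to $p_3$ (resp.\ $p_4$) while leaving the associator untouched; thus the right-hand sides evaluated at $p$ become $-p_3([a_2,b_2],a_3,b_3)$ and $-p_4([a_2,b_2]a_3,a_4,b_4)$, which are exactly the expressions in \eqref{8} and \eqref{9} up to sign.

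It then remains to show that the left-hand operators annihilate $p$, and this is where \eqref{4} and \eqref{5} do the work. Writing $p = -aa^b$ and using associativity of the right action, $p\,{a_2}_{b_2}(a_4^{b_4}) = -a\bigl(a^b\,{a_2}_{b_2}\,a_4^{b_4}\bigr)$ and $p\,{a_2}_{b_2} a_4'(a_5^{b_5}) = -a\bigl(a^b\,{a_2}_{b_2}\,a_4'\,a_5^{b_5}\bigr)$. Since \eqref{5} gives $a^b({a_2}_{b_2}) = 0$ as operators, each composite has the vanishing prefix $a^b{a_2}_{b_2}$ and is therefore the zero operator, so both expressions vanish. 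Equating the two evaluations of each shifted identity yields $p_3([a_2,b_2],a_3,b_3) = 0$ and $p_4([a_2,b_2]a_3,a_4,b_4) = 0$, which are precisely \eqref{8} and \eqref{9}.

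The argument is essentially bookkeeping once the mechanism is seen, so the main obstacle is notational rather than conceptual: I must perform the substitution $a\mapsto a_2,\ b\mapsto b_2$ in \eqref{6} and \eqref{7} so that every subscript lines up correctly as $a_3,b_3,a_4,b_4,a_5,b_5$, and I must keep track of the fact that the trailing $\alpha^3$ and $\alpha^4$ act on $p$ to produce $p_3$ and $p_4$ rather than shifting the associator. The only genuine input beyond this bookkeeping is the earlier triviality \eqref{5}, which is what forces each product to vanish.
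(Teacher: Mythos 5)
Your proof is correct and follows essentially the same route as the paper: write $p=-aa^b$ via \eqref{4}, use the index-shifted forms of \eqref{6} and \eqref{7} to recognize $p_3([a_2,b_2],a_3,b_3)$ and $p_4([a_2,b_2]a_3,a_4,b_4)$ as the evaluations at $p$ of operators with prefix $a^b({a_2}_{b_2})$, and conclude by \eqref{5}. The bookkeeping of the substitution $a\mapsto a_2$, $b\mapsto b_2$ and of the leading $\alpha^3$ (resp.\ $\alpha^4$) turning $p$ into $p_3$ (resp.\ $p_4$) is exactly as in the paper's computation.
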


\begin{proof}
To prove \eqref{8} we compute as follows:
\[
\begin{split}
p_3 \left([a_2,b_2], a_3, b_3\right)
&= -aa^b \alpha^3 \left([a_2,b_2], a_3, b_3\right)' \quad\text{(by \eqref{4})}\\
&= aa^b({a_2}_{b_2})a_4^{b_4} \quad\text{(by \eqref{6})}\\
&= 0 \quad\text{(by \eqref{5})}.
\end{split}
\]
To prove \eqref{9} we compute as follows:
\[
\begin{split}
p_4 \left([a_2,b_2]a_3, a_4, b_4\right)
&= -aa^b\alpha^4 \left([a_2,b_2]a_3, a_4, b_4\right)' \quad\text{(by \eqref{4})}\\
&= aa^b\left({a_2}_{b_2}\right) a_4' \left(a_5^{b_5}\right) \quad\text{(by \eqref{7})}\\
&= 0 \quad\text{(by \eqref{5})}.
\end{split}
\]
This finishes the proof of the identity \eqref{9}.
\end{proof}

In the next two lemmas, it is shown that each summand on the right-hand side of \eqref{d+e} is $0$.

\begin{lemma}
\label{lem:d}
The identity
\begin{equation}
\label{d=0}
-a^b \alpha \left({a_3}_{(ba)_2}\right) \alpha \left(a_6^{(ba)_5}\right) \left({a_8}_{b_8}\right) a_{10}' = 0
\end{equation}
holds in $A$.
\end{lemma}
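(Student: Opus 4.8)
The plan is to reduce the operator in \eqref{d=0} to the vanishing products of Hom-associators recorded in Lemma~\ref{lem6}, using the two expansions of $\alpha^2 p_k'$ from Lemmas~\ref{lem8} and~\ref{lem9} together with the superscript-then-subscript cancellation \eqref{5}. First I would clean up the two ends of the operator. By \eqref{10'} with $k=2$ one has $\alpha({a_3}_{(ba)_2}) = \alpha^2 p_2' + ({a_2}_{b_2})a_4'$, and since $a^b({a_2}_{b_2}) = 0$ by \eqref{5}, the leading pair collapses to $a^b\alpha({a_3}_{(ba)_2}) = a^b\alpha^2 p_2'$. Dually, by \eqref{10} with $k=5$ one has $\alpha(a_6^{(ba)_5}) = \alpha^2 p_5' + a_5'(a_6^{b_6})$, and since $a_6^{b_6}({a_8}_{b_8}) = 0$ by \eqref{5}, the tail collapses to $\alpha(a_6^{(ba)_5})({a_8}_{b_8})a_{10}' = \alpha^2 p_5'({a_8}_{b_8})a_{10}'$. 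Both cancellations are operator identities, valid independently of the surrounding factors, so after applying them the left-hand side of \eqref{d=0} becomes, up to sign, the operator $a^b\,\alpha^2 p_2'\,\alpha^2 p_5'\,({a_8}_{b_8})\,a_{10}'$, and it remains to show that this is $0$.

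For the endgame I would open the surviving middle factor with \eqref{10'}, writing $\alpha^2 p_5' = \alpha({a_6}_{(ba)_5}) - ({a_5}_{b_5})a_7'$, and treat the two resulting summands separately. In the summand coming from $({a_5}_{b_5})a_7'$ I would expand the trailing subscript operator as ${a_8}_{b_8} = a_8^{b_8} + a_8'b_9' - b_8'a_9'$, which is just the difference of the two operators in \eqref{ab}; the purely prime pieces $a_8'b_9'$ and $b_8'a_9'$ are absorbed through \eqref{1}, \eqref{2}, and \eqref{2'}, while the piece $a_8^{b_8}$ produces the subscript--prime--superscript string $({a_5}_{b_5})a_7'(a_8^{b_8})$, which \eqref{7} converts into the single right-multiplication operator $-\alpha^4([a_5,b_5]a_6,a_7,b_7)'$. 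The Hom-associator $([a_5,b_5]a_6,a_7,b_7)$ is exactly a $\alpha$-shift of the one in \eqref{9}, so once the leading element $a$ is supplied through $p = -aa^b$ of \eqref{4} this summand is brought into the form of the product annihilated by \eqref{9}. The summand coming from $\alpha({a_6}_{(ba)_5})$ is handled in the same spirit: after realigning its second slots the subscript--superscript adjacency is routed through \eqref{6}, producing a Hom-associator of the shape in \eqref{8}, and Lemma~\ref{lem6} again kills it.

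The genuinely routine part is the constant shuffling of the powers $\alpha^n$ past the operators by means of \eqref{0}, keeping every subscript correctly incremented. The main obstacle is the bookkeeping forced by the fact that the second slots of the operators alternate between the ``$b$-type'' entry $b_k$ and the ``$(ba)$-type'' entry $(ba)_k$: because of this, \eqref{5} never applies on the nose to an adjacent superscript/subscript pair, and one must first realign the two slots with the linearized identity \eqref{5'} and only then feed the adjacency into \eqref{6} or \eqref{7}, at precisely the step where the emerging Hom-associator coincides, after the correct $\alpha$-shift, with one of those in \eqref{8} or \eqref{9}. Getting all of these $\alpha$-indices to match Lemma~\ref{lem6}, rather than drifting off by one, is where the calculation must be carried out with care.
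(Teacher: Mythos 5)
Your opening reductions are correct, but they only undo the work of Lemma \ref{lem:d+e}: collapsing $a^b\alpha\left({a_3}_{(ba)_2}\right)$ to $a^b\alpha^2p_2'$ and $\alpha\left(a_6^{(ba)_5}\right)\left({a_8}_{b_8}\right)$ to $\alpha^2p_5'\left({a_8}_{b_8}\right)$ is exactly \eqref{de2} and part of \eqref{de3} run backwards, so after your first paragraph you are holding $-a^b\alpha^2p_2'\alpha^2p_5'\left({a_8}_{b_8}\right)a_{10}'$, essentially the expression the paper started from. The real content of the lemma is in your endgame, and that is where the argument breaks down. First, expanding ${a_8}_{b_8}=a_8^{b_8}+a_8'b_9'-b_8'a_9'$ creates the extra operator $a^b\alpha^2p_2'\left({a_5}_{b_5}\right)a_7'\left(a_8'b_9'-b_8'a_9'\right)a_{10}'$, and saying these prime pieces are ``absorbed through \eqref{1}, \eqref{2}, and \eqref{2'}'' is not an argument: those identities rearrange strings of right multiplications but give no reason for this term to vanish, and it does not vanish piece by piece. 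Second, the piece you do treat, $a^b\alpha^2p_2'\left({a_5}_{b_5}\right)a_7'\left(a_8^{b_8}\right)a_{10}'$, becomes $-\alpha^6a_6^{b_6}p_6'q'a_{10}'$ with $q=([a,b]_5a_6,a_7,b_7)$ after \eqref{7} and \eqref{0}; Lemma \ref{lem6} kills the single product $p_7q$ (the $\alpha^3$-shift of \eqref{9}), but your operator applied to an arbitrary $x$ produces $\left(((x,a,b)_6\,p_6)q\right)a_{10}$, and even at $x=a$ it produces $\left((p^2)_6q\right)a_{10}$; neither is of the form $p_kq$. ``Supplying the leading element $a$'' both weakens the claim from the operator identity \eqref{d=0} to its evaluation at $a$ and still fails to produce the product that \eqref{9} annihilates. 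The treatment of the other summand via \eqref{6} has the same defect: \eqref{6} needs the adjacency $c_d(c_2^{d_2})$, which your summand $-a^b\alpha^2p_2'\alpha\left({a_6}_{(ba)_5}\right)\left({a_8}_{b_8}\right)a_{10}'$ does not contain, and no realignment identity for two adjacent subscript operators is available.

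The structural point you are missing is that the paper never leaves $a^b$ at the front or $\left({a_8}_{b_8}\right)a_{10}'$ at the back. It first uses \eqref{0} and \eqref{5'} to trade the $(ba)$-type second slots for $b$-type ones, then applies \eqref{10} and \eqref{10'} in the direction $\alpha a_1^{ba}=\alpha^2p'+a'a_1^{b_1}$ and $\alpha{a_8}_{(ba)_7}=\alpha^2p_7'+{a_7}_{b_7}a_9'$, the cross terms dying against the middle factor ${a_3}_{b_3}a_5^{b_5}$ by \eqref{5}. This leaves $-\alpha^2p'\left({a_3}_{b_3}a_5^{b_5}\right)\alpha^2p_7'a_{10}'$, in which \eqref{6} turns the middle into a right multiplication and the three right multiplications then sit in the Moufang pattern $c'd_1'c_2'$ of \eqref{2}; only at that point does the whole operator collapse to right multiplication by the single element $\left(p_5([a,b]_4,a_5,b_5)\right)p_6$, which vanishes by \eqref{8}. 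Your decomposition destroys precisely this alignment, so the reduction to Lemma \ref{lem6} cannot be carried out as you describe.
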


\begin{proof}
To prove \eqref{d=0}, we compute as follows:
\[
\begin{split}
& -a^b \alpha \left({a_3}_{(ba)_2}\right) \alpha \left(a_6^{(ba)_5}\right) \left({a_8}_{b_8}\right) a_{10}'\\
&= -\alpha \left(a_1^{b_1} {a_3}_{(ba)_2}\right) \alpha \left(a_6^{(ba)_5} {a_8}_{b_8}\right) a_{10}' \quad\text{(by \eqref{0})}\\
&= -\alpha\left(a_1^{ba} {a_3}_{b_3}\right) \alpha \left(a_6^{b_6} {a_8}_{(ba)_7}\right) a_{10}' \quad\text{(by \eqref{5'})}\\
&= -\left(\alpha a_1^{ba}\right) \left({a_3}_{b_3} a_5^{b_5}\right) \left(\alpha {a_8}_{(ba)_7}\right) a_{10}' \quad\text{(by \eqref{0})}\\
&= -\left(\alpha^2p' + a'a_1^{b_1}\right) \left({a_3}_{b_3} a_5^{b_5}\right) \left(\alpha^2p_7' + {a_7}_{b_7}a_9'\right) a_{10}' \quad\text{(by \eqref{10} and \eqref{10'})}\\
&= -\alpha^2p' \left({a_3}_{b_3} a_5^{b_5}\right) \alpha^2p_7' a_{10}' \quad\text{(by \eqref{5})}\\
&= \alpha^2p'\alpha^3 \left([a,b]_3, a_4, b_4\right)' \alpha^2 p_7' a_{10}' \quad\text{(by \eqref{6})}\\
&= \alpha^7p_5' \left([a,b]_4, a_5, b_5\right)_1' p_7' a_{10}' \quad\text{(by \eqref{0})}\\
&= \alpha^9 \left\{\left(p_5 ([a,b]_4,a_5,b_5)\right) p_6\right\}' a_{10}' \quad\text{(by \eqref{2})}\\
&= \alpha^9 \left\{(p_3([a_2,b_2],a_3,b_3))_2 p_6\right\}' a_{10}' \quad\text{(by \eqref{0})}\\
&= 0 \quad\text{(by \eqref{8})}.
\end{split}
\]
This finishes the proof of the identity \eqref{d=0}.
\end{proof}

\begin{lemma}
\label{lem:e}
The identity
\begin{equation}
\label{e=0}
- a^b \alpha \left({a_3}_{(ba)_2}\right) a_5' \left(a_6^{b_6}\right) \alpha \left({a_9}_{(ba)_8}\right) = 0
\end{equation}
holds in $A$.
\end{lemma}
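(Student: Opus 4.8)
The plan is to mimic the proof of Lemma \ref{lem:d}, replacing the roles of \eqref{6} and \eqref{8} there by their $a_2'$-shifted analogues \eqref{7} and \eqref{9}. First I would pull the leading $\alpha$ through $a^b$ using \eqref{0}, writing $a^b\alpha\left({a_3}_{(ba)_2}\right) = \alpha\left(a_1^{b_1}{a_3}_{(ba)_2}\right)$, and then apply the linearized identity \eqref{5'} to the first block, which turns $a_1^{b_1}{a_3}_{(ba)_2}$ into $-a_1^{ba}{a_3}_{b_3}$ and flips the overall sign. This recasts the left-hand side of \eqref{e=0} as
\[
\alpha a_1^{ba}\,{a_3}_{b_3}\,a_5'\,a_6^{b_6}\,\alpha\,{a_9}_{(ba)_8}.
\]
The tail ${a_9}_{(ba)_8}$ already carries $ba$ in the subscript, so, unlike in Lemma \ref{lem:d}, no second application of \eqref{5'} is needed here.

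Next I would expand the two flanking factors by means of Lemmas \ref{lem8} and \ref{lem9}: by \eqref{10} with $k=0$ one has $\alpha a_1^{ba} = \alpha^2p' + a'a_1^{b_1}$, and by \eqref{10'} with $k=8$ one has $\alpha\,{a_9}_{(ba)_8} = \alpha^2p_8' + {a_8}_{b_8}a_{10}'$. Substituting both and multiplying out produces four terms surrounding the middle factor ${a_3}_{b_3}a_5'a_6^{b_6}$. Three of them vanish by \eqref{5}: the left endpoint ${a_3}_{b_3}$ annihilates the $a_1^{b_1}$ coming from $a'a_1^{b_1}$ (since $a_1^{b_1}{a_3}_{b_3}=0$), and the right endpoint $a_6^{b_6}$ annihilates the ${a_8}_{b_8}$ coming from ${a_8}_{b_8}a_{10}'$ (since $a_6^{b_6}{a_8}_{b_8}=0$). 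Only the term $\alpha^2p'\left({a_3}_{b_3}a_5'a_6^{b_6}\right)\alpha^2p_8'$ survives.

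Finally I would invoke \eqref{7} with $a,b$ replaced by $a_3,b_3$ to rewrite the surviving middle factor as ${a_3}_{b_3}a_5'a_6^{b_6} = -\alpha^4\left([a_3,b_3]a_4,a_5,b_5\right)'$, collect the powers of $\alpha$ and shift subscripts using \eqref{0}, and apply the operator form \eqref{2} of the right Hom-Moufang identity to fold $p_6'\left([a_4,b_4]a_5,a_6,b_6\right)_1'p_8'$ into $\alpha^2\left[\left(p_6\left([a_4,b_4]a_5,a_6,b_6\right)\right)p_7\right]'$. The inner element is $\left(p_4\left([a_2,b_2]a_3,a_4,b_4\right)\right)_2$, which is zero by \eqref{9}, so the whole expression collapses to $0$. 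I expect the main obstacle to be purely bookkeeping: tracking the precise $\alpha$-powers and subscripts through the substitutions so that the cross terms align with \eqref{5} and the final factor matches \eqref{9} exactly. Conceptually the argument is the step-for-step analogue of Lemma \ref{lem:d}.
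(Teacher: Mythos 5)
Your proposal is correct and follows the paper's proof of Lemma \ref{lem:e} essentially step for step: the same initial use of \eqref{0} and \eqref{5'}, the same expansion of the two flanking factors via \eqref{10} and \eqref{10'} with the cross terms killed by \eqref{5}, and the same finish via \eqref{7}, \eqref{2}, and \eqref{9}. The bookkeeping of $\alpha$-powers and subscripts you flag as the main risk does work out exactly as you describe.
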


\begin{proof}
To prove \eqref{e=0} we compute as follows:
\[
\begin{split}
& - a^b \alpha \left({a_3}_{(ba)_2}\right) a_5' \left(a_6^{b_6}\right) \alpha \left({a_9}_{(ba)_8}\right)\\
&= -\alpha \left(a_1^{b_1}{a_3}_{(ba)_2}\right) a_5' \left(a_6^{b_6}\right) \alpha \left({a_9}_{(ba)_8}\right) \quad \text{(by \eqref{0})}\\
&= \left(\alpha a_1^{ba}\right) \left({a_3}_{b_3} a_5' a_6^{b_6}\right) \alpha \left({a_9}_{(ba)_8}\right) \quad \text{(by \eqref{5'})}\\
&= \left(\alpha^2p' + a'a_1^{b_1}\right) \left({a_3}_{b_3} a_5' a_6^{b_6}\right) \left(\alpha^2p_8' + \left({a_8}_{b_8}\right) a_{10}'\right) \quad \text{(by \eqref{10} and \eqref{10'})}\\
&= \alpha^2p' \left({a_3}_{b_3} a_5' a_6^{b_6}\right) \alpha^2p_8' \quad\text{(by \eqref{5})}\\
&= -\alpha^2 p' \alpha^4 \left([a,b]_3a_4,a_5,b_5\right)' \alpha^2p_8' \quad\text{(by \eqref{7})}\\
&= -\alpha^8 p_6' \left([a,b]_4a_5,a_6,b_6\right)_1' p_8' \quad\text{(by \eqref{0})}\\
&= -\alpha^{10} \left\{\left(p_6 \left([a,b]_4a_5,a_6,b_6\right)\right) p_7\right\}' \quad\text{(by \eqref{2})}\\
&= -\alpha^{10} \left\{\left(p_4 \left([a_2,b_2]a_3, a_4, b_4\right)\right)_2 p_7\right\}' \quad\text{(by \eqref{0})}\\
&= 0 \quad \text{(by \eqref{9})}.
\end{split}
\]
This finishes the proof of the identity \eqref{e=0}.
\end{proof}

\begin{proposition}
\label{prop:0}
The identity
\[
a^bp'p_1'p_2'\alpha^6 = 0
\]
holds in $A$, where $p = (a,a,b)$.
\end{proposition}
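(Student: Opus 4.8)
The plan is to obtain Proposition \ref{prop:0} as a short synthesis of the three structural lemmas already in place, since the entire computational burden has been front-loaded into the preparatory results. First I would apply Lemma \ref{lem:d+e}, whose identity \eqref{d+e} rewrites the target operator $a^bp'p_1'p_2'\alpha^6$ as the sum of precisely two operators. This reduces the claim to verifying that each of those two summands is trivial, after which the conclusion is immediate.

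Next I would dispatch the two summands separately using the already-established vanishing results. The first summand is exactly the operator appearing on the left-hand side of \eqref{d=0}, which Lemma \ref{lem:d} shows to be $0$; the second summand is exactly the operator on the left-hand side of \eqref{e=0}, which Lemma \ref{lem:e} shows to be $0$. Adding these, the right-hand side of \eqref{d+e} vanishes, and hence so does $a^bp'p_1'p_2'\alpha^6$. This completes the argument.

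Because the proposition reduces so cleanly to the prior lemmas, the genuine obstacle lies not in this final assembly but in the preparatory work it invokes, and any honest account should locate the difficulty there. The decomposition \eqref{d+e} depends on the two distinct ways of expressing $\alpha^2 p_k'$ through the Herstein operators $c^d$ and $c_d$ (Lemmas \ref{lem8} and \ref{lem9}), followed by repeated cancellation of cross terms via the orthogonality relation \eqref{5}; the delicate twisting-index bookkeeping in that step is the first hard point. The vanishing of each summand then relies on inserting the factored forms \eqref{6} and \eqref{7} for $a_b(a_2^{b_2})$ and $a_ba_2'(a_3^{b_3})$, and these computations ultimately bottom out in the two identities \eqref{8} and \eqref{9}, which express the triviality of certain products of two Hom-associators and themselves descend from right Hom-alternativity via \eqref{4} and \eqref{5}. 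Thus the real content is the chain of reductions terminating at \eqref{8} and \eqref{9}; the proof of Proposition \ref{prop:0} recorded here merely harvests their conclusions.
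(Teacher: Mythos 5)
Your proposal is correct and coincides with the paper's own proof, which simply combines Lemmas \ref{lem:d+e}, \ref{lem:d}, and \ref{lem:e} exactly as you describe. Your accompanying remarks about where the real difficulty lies (in the decomposition \eqref{d+e} and the vanishing identities \eqref{8} and \eqref{9}) accurately reflect the structure of the paper's argument.
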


\begin{proof}
Combine Lemmas \ref{lem:d+e}, \ref{lem:d}, and \ref{lem:e}.
\end{proof}

The proof of Theorem \ref{thm:mikheev} is now complete.



\begin{thebibliography}{AA}
\bibitem{albert1}
A.A. Albert, On the power-associativity of rings, Summa Brasil. Math. 2 (1948) 21-32.

\bibitem{albert2}
A.A. Albert, Power-associative rings, Trans. Amer. Math. Soc. 64 (1948) 552-593.

\bibitem{albert3}
A.A. Albert, On right alternative algebras, Ann. Math. 50 (1949) 318-328.


\bibitem{hls}
J.T. Hartwig, D. Larsson, and S.D. Silvestrov, Deformations of Lie algebras using $\sigma$-derivations, J. Alg. 295 (2006) 314-361.

\bibitem{herstein}
I.N. Herstein, Jordan homomorphisms, Trans. Amer. Math. Soc. 81 (1956) 331-341.

\bibitem{kleinfeld}
E. Kleinfeld, Right alternative rings, Proc. Amer. Math. Soc. 4 (1953) 939-944.

\bibitem{mak}
A. Makhlouf, Hom-alternative algebras and Hom-Jordan algebras, Int. Elect. J. Alg. 8 (2010) 177-190.

\bibitem{ms}
A. Makhlouf and S. Silvestrov, Hom-algebra structures, J. Gen. Lie Theory Appl. 2 (2008) 51-64.

\bibitem{ms2}
A. Makhlouf and S. Silvestrov, Hom-algebras and Hom-coalgebras, J. Alg. Appl. 9 (2010) 1-37.

\bibitem{maltsev}
A.I. Mal'tsev, Analytic loops, Mat. Sb. 36 (1955) 569-576.

\bibitem{mikheev}
I.M. Mikheev, On an identity in right alternative rings, Alg. i Logika 8 (1969) 357-366; english translation:  Alg. Logic 8 (1969) 204-211.

\bibitem{moufang}
R. Moufang, Zur struktur von alternativ korpern, Math. Ann. 110 (1935) 416-430.

\bibitem{schafer}
R.D. Schafer, An introduction to nonassociative algebras, Dover Pub., New York, 1995.

\bibitem{skor1}
L.A. Skornyakov, Right alternative division rings, Izv. Akad. Nauk SSSR Ser. Mat 15 (1951) 177-184.

\bibitem{skor2}
L.A. Skornyakov, Projective planes, Uspehi Mat. Nauk 4 (1951) 112-154.

\bibitem{tw}
J. Tits and R.M. Weiss, Moufang polygons, Springer-Verlag, Berlin, 2002.

\bibitem{yau0}
D. Yau, Enveloping algebras of Hom-Lie algebras, J. Gen. Lie Theory Appl. 2 (2008) 95-108.

\bibitem{yau1}
D. Yau, Hom-algebras and homology, J. Lie Theory 19 (2009) 409-421.

\bibitem{yau2}
D. Yau, The Hom-Yang-Baxter equation, Hom-Lie algebras, and quasi-triangular bialgebras, J. Phys. A 42 (2009) 165202 (12pp).

\bibitem{yau3}
D. Yau, Hom-bialgebras and comodule Hom-algebras, Int. E. J. Alg. 8 (2010) 45-64.

\bibitem{yau4}
D. Yau, Hom-Maltsev, Hom-alternative, and Hom-Jordan algebras, Int. E. J. Alg. 11 (2012) 177-217.

\bibitem{yau5}
D. Yau, Hom-power associative algebras, arXiv:1007.4118.

\bibitem{yau6}
D. Yau, Right Hom-alternative algebras, arXiv:1010.3407.

\bibitem{zsss}
K.A. Zhevlakov, A.M. Slin'ko, I.P. Shestakov, and A.I. Shirshov, Rings that are nearly associative, translated from Russian by H.F. Smith, Academic Press, New York, 1982.
\end{thebibliography}
\end{document}